\numberwithin{equation}{section}
\theoremstyle{plain}
\newtheorem{theorem}[equation]{Theorem}
\newtheorem{proposition}[equation]{Proposition}
\newtheorem{lemma}[equation]{Lemma}
\theoremstyle{definition}
\theoremstyle{remark}
\newtheorem{remark}[equation]{Remark}
\def\mcZ{\mathcal{Z}}
\def\mcF{\mathcal{F}}
\def\mcH{\mathcal H}
\def\mcC{\mathcal C}
\def\mcR{\mathcal R}
\def\parti{\partial}
\def\bi{\text{\boldmath$i$}}
\def\bj{\text{\boldmath$j$}}
\begin{document}
	
	\title[The Brundan-Kleshchev isomorphism revisited]{The Brundan-Kleshchev isomorphism revisited}
	\author [\tiny{Fan Kong and Zhi-Wei Li}] {Fan Kong and Zhi-Wei Li*}
	
	\begin{abstract} We give a short and unified proof of the Brundan-Kleshchev isomorphism between blocks of cyclotomic Hecke algebras and cyclotomic Khovanov-Lauda-Rouquier algebras of type A.
	\end{abstract}
	
	\date{\today}
	\thanks{*Corresponding author}
	\thanks{{\em 2000 Mathematics Subject Classification:} 20C08.}
	\thanks{The first author is supported by the NSF of China (No. 11501057) and the Doctoral Fund of Southwest University (SWU-118003); The second author is supported by the NSF of China (No. 11671174).}
	
	\address{Fan Kong\\ School of Mathematics and Statistics \\
		Southwest University \\ Chongqing 400715\\ P. R. China.}
	\email{kongfan85@126.com}

	\address{Zhi-Wei Li\\ School of Mathematics and Statistics \\
		Jiangsu Normal University\\	Xuzhou 221116 Jiangsu \\ P. R. China.}
	\email{zhiweili@jsnu.edu.cn}
	\maketitle
	
	\setcounter{tocdepth}{1}
	
	\section{Introduction}

In 2008, Brundan and Kleshchev \cite{Brundan-Kleshchev09} gave an explicit isomorphism between blocks of cyclotomic affine Hecke algebras of symmetric groups, or the corresponding degenerated cyclotomic affine Hecke algebras, and cyclotomic KLR algebras of type A. On the other hand, in 1989, Lusztig showed that there is a natural isomorphism between affine Hecke algebras and their graded versions \cite{Lusztig89}. Motivated by Lusztig's work, we introduced the semi-rationalization extensions of affine Hecke algebras in \cite{KoLi}, where an isomorphism between direct sums of blocks of cyclotomic affine Hecke algebra of all types and their cyclotomic semi-rationalization algebras was given. 
\vskip5pt
The aim of this paper is to give a shorter proof of Brundan and Kleshchev isomorphism using the machinery of our general result.
\vskip5pt
\noindent{\bf Main result} (Theorem \ref{thm:mainresult})\ {\itshape The cyclotomic KLR algebra of type A and blocks of the cyclotomic $($degenerate$)$ affine Hecke algebra of a symmetric group can be realized as the same quotient of an algebra.}
\vskip5pt
\noindent It can be illustrated by the following diagram of algebras.
\[
\begin{tikzpicture}
\node at(0,0){$\tilde{\mathcal{L}}$};
\node at (-1,0){$\mathcal{L}$};
\node at (1.5,0){$\tilde{\mathcal{L}}(\Lambda)$};
\node at (4,1){$\mcR(\Lambda)$};
\node at (4.5,-1){$\mcH(\Lambda)e(\mcC)$};
\node at (4.5,0){$\mcH_q(\Lambda)e(\mcC)$};
\node at (2.9,.2){$\cong$};
\node at (2.8,.7){$\cong$};
\node at (2.8,-.8){$\cong$};

\draw[left hook-latex](-.2,0)--(-.8,0);
\draw[->>](.2,0)--(1,0);
\draw[->](2.1,0)--(3.5,0);
\draw[->](2.1,.1)--(3.5,.8);
\draw[->](2.1,-.1)--(3.5,-.8);
\end{tikzpicture}
\]
The algebra $\mathcal{L}$, called the Lusztig algebra, is some extension of a degenerate affine Hecke algebra or a non-degenerate affine Hecke algebra. This algebra contains all corresponding rational functions, which allow us to construct the KLR-basis in a unified way either in the degenerate case or in the non-degenerate case. The algebra $\tilde{\mathcal{L}}$, called the semi-rationalization algebra, can be viewed as a subalgebra of $\mathcal{L}$ only containing part of the rational functions. The semi-rationalization algebra $\tilde{\mathcal{L}}$ has three bases: the first one is the KLR basis by which we construct an isomorphism between the cyclotomic semi-rationalization algebra $\tilde{\mathcal{L}}(\Lambda)$ and the cyclotomic KLR algebra $\mcR(\Lambda)$; the second one is the degenerate Bernstein-Zelevinski basis by which we give an isomorphism between $\tilde{\mathcal{L}}(\Lambda)$ and the cyclotomic degenerate affine Hecke algebra $\mcH(\Lambda)$; the third one is the non-degenerate Bernstein-Zelevinski basis by which we give an isomorphism between $\tilde{\mathcal{L}}(\Lambda)$ and the cyclotomic non-degenerate affine Hecke algebra $\mcH_q(\Lambda)$.

\section{Preliminaries}

\subsection{The Demazure operator}
Let $S_n$ be the symmetric group with basic transpositions $\sigma_1,\cdots, \sigma_{n-1}$. Let $\Bbbk$ be a field. Then $S_n$ acts on the left on the polynomial ring $\Bbbk[X]$ and on the Laurent polynomial ring $\Bbbk[X^{\pm 1}]$ in $X:=X_1,\cdots,X_n$ by permuting variables. Using the $S_n$-action above, the {\itshape Demazure operators} are defined as $\parti_r$ on $\Bbbk[x]$ for all $1\leq r<n$ as $$\parti_r(f)=\frac{\sigma_r(f)-f}{X_{r}-X_{r+1}}.$$  
It is well-known that the Demazure operators satisfy the Leibniz rule
\begin{equation*}\label{deLeibniz}\parti_r(fg)=\parti_r(f)g+\sigma_r(f)\parti_r(g),
\end{equation*}
for all $f,g\in \Bbbk[X]$ and for $1\leq r<n$, and the relations
$$\sigma_r(\parti_r(f))=\parti_r(f), \quad \parti_r(\sigma_r(f))=-\parti_r(f).$$

Let $\Bbbk(X)$ be the corresponding rational function field, then the $S_n$-action on $\Bbbk[X]$ above can be extended to an action $w\colon \tfrac{f}{g}\mapsto \tfrac{w(f)}{w(g)}$ of $S_n$ on $\Bbbk(X)$ (by the field automorphism). This means that the action of the Demazure operators on $\Bbbk[X]$ also extends to operators on $\Bbbk(X)$.

Let $F$ be the quotient field of the subalgebra $$Z:=\Bbbk[X]^{S_n}=\{f\in \Bbbk[X] | w(f)=f \ \mbox{for every}\ w\in S_n\}$$ of $S_n$-invariants. By \cite[3.12 (a)]{Lusztig89}, there is a natural $\Bbbk$-algebra isomorphism 
\begin{equation} \label{K(X)} \Bbbk[X]\otimes_{Z} F\to \Bbbk(X), f\otimes g\mapsto fg. \end{equation} 


\subsection{Index sets} Let $I=\mathbb{Z}/e\mathbb{Z}=\{0,\cdots, e-1\}$ ($e=0\ \mbox{or} \ 2\leq e\in\mathbb{Z}$) be the vertex set of the quiver of type $A_\infty$ if $e=0$ or $A_{e-1}^{(1)}$ if $e\geq 2$:
\[
\begin{tikzpicture}
\draw[->] (-4.6,0)--(-4,0);\draw[->](-3.2,0)--(-2.6,0);\draw[->](-1.8,0)--(-1.2,0);\draw[->](-.8,0)--(-.2,0);\draw[->](.2,0)--(.8,0);
\draw[->](1.2,0)--(1.8,0);
\node at (-7,0) {$A_\infty:$};\node at (-5,0) {$\cdots$};\node at (-3.6,0) {$-2$};\node at (-2.2,0){$-1$};\node at (-1,0) {$0$};
\node at (0,0) {$1$};\node at (1,0){$2$};\node at (2.2,0){$\cdots$};

\draw[->](-5,-1.4)--(-4.5,-1.4);\draw[->](-4.5,-1.6)--(-5,-1.6);\draw[->](-3.3,-1.8)--(-2.9,-1.2);\draw[->](-2.5,-1.2)--(-2.1,-1.8);\draw[->](-2.3,-2)--
(-3.1,-2);\draw[->](-1,-1.8)--(-1,-1.2);\draw[->](-.8,-1)--(-.2,-1);\draw[->](0,-1.3)--(0,-1.8);\draw[->](-.2,-2)--(-.8,-2);\draw[->](1.2,-2.2)--(1,-1.8);
\draw[->](1.1,-1.4)--(1.5,-1.1);\draw[->](1.9,-1.1)--(2.3,-1.4);\draw[->](2.4,-1.8)--(2.2,-2.2);\draw[->](1.9,-2.4)--(1.5,-2.4);
\node at (-7,-1.5){$A_{e-1}^{(1)}:$};\node at(-5.2,-1.5){$0$};\node at(-4.3,-1.5){$1$};\node at(-3.5,-2){$2$};\node at(-2.7,-1){$0$};\node at(-1.9,-2){$1$};\node at(-1,-2){$3$};\node at(-1,-1){$0$};\node at (0,-1){$1$};\node at(0,-2){$2$};\node at(1.3,-2.4){$3$};\node at(.9,-1.6){$4$};
\node at(1.7,-1){$0$};\node at(2.5,-1.6){$1$};\node at(2.1,-2.4){$2$};\node at (3.5,-1.6){$\cdots$};
\end{tikzpicture}
\]
For $1\leq r<n$, we define the map $\sigma_r(\bi)_s=i_{\sigma_r(s)}$.
Then $S_n$ acts on the set of $n$-tuples $\bi=(i_1,\cdots, i_n)\in I^n$ by the place permutation $:w(\bi)_s=i_{w^{-1}(s)}$. 

Throughout this paper, We fix an $S_n$-orbit $\mathcal{C}$ of $I^n$.

\section{Affine Hecke algebras and their rationalizations}
\subsection{The degenerate affine Hecke algebra}
Following \cite{Drinfeld}, the {\itshape degenerate affine Hecke algebra}  $\mcH$ of $S_n$ is defined to be the associated unital $\Bbbk$-algebra with generators $\{X_1, \cdots, X_n, T_1,\cdots, T_{n-1}\}$ subject to the following relations for all admissible indices:
\begin{align}
\label{deaffhecke X}
X_rX_s&=X_sX_r;
\\
\label{deTX}
T_r X_{s} &=\sigma_{r}(X_s)T_r+\parti_r(X_s);\\
\label{deT2}
\quad T_r^2&=1;
\\
\label{deTT}
T_r  T_s &= T_s  T_r \hspace{43.4mm} \text{if $|r-s|>2$};
\\
\label{deTTT}
T_r T_{r+1}T_r &= T_{r+1}T_rT_{r+1};
\end{align}

For $w=\sigma_{r_1} \sigma_{r_2}\cdots \sigma_{r_m}\in S_n$ a reduced expression we put $T_w:=T_{r_1}T_{r_2}\cdots T_{r_m}$. Then $T_w$ is a well-defined element in $\mcH$ and the algebra $\mcH$ is a free $\Bbbk[X]$-module with the basis $\{T_w\ | \ w\in S_n\}$.

\subsection{The non-degenerate affine Hecke algebra} Assume $1\neq q\in \Bbbk^{\times}$. We define the {\itshape affine Hecke algebra} $\mcH_{q}$ to be the associated unital $\Bbbk$-algebra with generators $\{X_1^{\pm 1}, \cdots, X_{n}^{\pm1}, T_1,\cdots, T_{n-1}\}$ subject to the following relations for all admissible indices:
\begin{align}
\label{relation:affhecke X}
X_r^{\pm1}X_s^{\pm1}&=X_s^{\pm1}X_r^{\pm1},\quad X_rX_r^{-1}=X_r^{-1}X_r=1;
\\
\label{TX}
T_r X_{s} &=\sigma_r(X_s)T_r+(q-1)X_{r+1}\parti_r(X_s);\\
\label{relation:affhecke Tq}
T_r^2&=(q-1)T_r+q;
\\
\label{relation:affinehecke_T}
T_r  T_s &= T_s  T_r \hspace{43.4mm} \text{if $|r-s|>2$};
\\
\label{TTT}
T_r T_{r+1}T_r &= T_{r+1}T_rT_{r+1};
\end{align}
Similar to the degenerate case, $\mcH_q$ is a free $\Bbbk[X^{\pm1}]$-module with the basis $\{T_w\ | \ w\in S_n\}$.

\subsection{The rationalization} By \cite[Proposition 3.11]{Lusztig89}, the center of $\mcH$ is the commutative algebra $Z$ of $S_n$-invariants. Thus $\mcH$ can be seen as a $Z$-subalgebra (identified with the subspace $\mcH\otimes 1$) of the $F$-algebra $\mcH_F:=\mcH\otimes_{Z}F$. 

Similarly, the non-degenerate affine Hecke algebra $\mcH_{q}$ also can be seen as a $Z$-subalgebra (identified with the subspace $\mcH_{q}\otimes 1$) of the $F$-algebra $\mcH_{q,F}:=\mcH_{q}\otimes_{\mcZ}\mcF.$ For any $1\leq r<n$ and $f\in \Bbbk(X)$, as a consequence of \cite[3.12 (d)]{Lusztig89}, we have
\begin{align} \label{fT} T_rf=\begin{cases}
\sigma_r(f)T_r+\parti_r(f) & \text{if $f\in \mcH_F$},\\
\sigma_r(f)T_r+(q-1)X_{r+1}\parti_r(f) & \text{if $f\in \mcH_{q,F}$}.
\end{cases}\end{align}

In light of the Bernstein and Zelevinski basis of $\mcH$ and $\mcH_q$, there are decompositions, see \cite[3.12(c)]{Lusztig89}:
\begin{equation}\label{deTwdecom}\mcH_{F} (\mbox{resp.}, \mcH_{q,F})=\oplus_{w\in S_n}T_w\Bbbk(x)=\oplus_{w\in S_n}\Bbbk(x)T_w. \end{equation}

\subsection{Intertwining elements} For $1\leq r<n$, we define the {\itshape intertwining element} $\kappa_r$ as follows:
$$\kappa_r:=\begin{cases}
T_r+\tfrac{1}{X_r-X_{r+1}} & \text{in $\mcH_F$},\\
T_r+\tfrac{(q-1)X_{r+1}}{X_r-X_{r+1}} & \text{ in $\mcH_{q,F}$}.
\end{cases}$$  
The elements $\kappa_r$ have the following properties \cite[Proposition 5.2]{Lusztig89}:
\begin{align}\label{fkappa} \kappa_rf&=\sigma_r(f)\kappa_r \hspace{15mm}  \text{$\forall f\in \Bbbk(X)$};\\
\label{kappa^2}\kappa_r^2&=\begin{cases}
1-\tfrac{1}{(X_r-X_{r+1})^{2}}& \text{in $\mcH_F$},\\
q-\tfrac{(q-1)^2X_rX_{r+1}}{(X_r-X_{r+1})^2}& \text{ in $\mcH_{q,F}$};
\end{cases}\\
\label{kappa2}\kappa_r\kappa_s&=\kappa_s\kappa_r \hspace{20mm} \text{if $s\neq r,r+1$};\\
\label{kappa3}\kappa_r\kappa_{r+1}\kappa_r&=\kappa_{r+1}\kappa_r\kappa_{r+1}.
\end{align}
If $w=\sigma_{r_1} \sigma_{r_2}\cdots \sigma_{r_m}$ is a reduced expression in $S_n$, then we can define $\kappa_w:=\kappa_{r_1}\kappa_{r_2}\cdots \kappa_{r_m}$ in $\mcH_F$ (resp., $\mcH_{q,F}$). It is also a well-defined element by the braid relations (\ref{kappa2}) and (\ref{kappa3}). By the decompositions (\ref{deTwdecom}), we know that $\{\kappa_w \ | \ w\in S_n\}$ is a basis of $\mcH_F$ (resp., $\mcH_{q,F}$) as free $\Bbbk(X)$-module.

\section{The Lusztig extensions of affine Hecke algebras}
From now on, we fix $I=\mathbb{Z}/e\mathbb{Z}$, where $e=\mathrm{char}\Bbbk$ for the degenerate affine Hecke algebras, and $e$ is the smallest positive integer such that $1+q+\cdots +q^{e-1}=0$ and setting $e=0$ if no such integer exists for the non-degenerate affine Hecke algebras. 

\subsection{The Lusztig extensions}  Let $\mathcal{E}$ be the unital $\Bbbk$-algebra with basis $\{\epsilon(\bi) | \bi\in \mcC\}$. Multiplication is given by 
\begin{equation} \label{epsilon} \epsilon(\bi)\epsilon(\bj)=\delta^\bi_\bj\epsilon(\bi).\end{equation}
The {\it Lusztig extension} of $\mcH$ with respect to $\mathcal{E}$ is the $\Bbbk$-algebra $\mathcal{L}$ which is equal as $\Bbbk$-space to the tensor product $$\mathcal{L}:=\mcH_F\otimes_\Bbbk\mathcal{E}=\oplus_{w\in S_n,\bi\in \mcC}\kappa_w\Bbbk(X)\epsilon(\bi)$$
of the rationalization algebra $\mcH_F$ and the semi-simple algebra $\mathcal{E}$. Multiplication is defined so that $\mcH_{F}$ (identified with the subspace $\mcH_{F}\otimes 1$) and
$\mathcal{E}$ (identified with the subspace $1\otimes \mathcal{E}$) are subalgebras of $\mathcal{L}$, and in addition
   \begin{align}
 \label{Xepis}X_r\epsilon(\bi)&=\epsilon(\bi)X_r,\\
 \label{kappaepsi}\kappa_r\epsilon(\bi)&=\epsilon(\sigma_r(\bi))\kappa_r,
 \end{align}
for all $1\leq r<n$ and $\bi\in \mcC$.

The {\it Lusztig extension $\mathcal{L}_q$} of $\mcH_q$ with respect to $\mathcal{E}$ is defined similarly. We have the following relations \begin{equation}\label{deTepsi}T_r\epsilon(\bi)=\begin{cases}\epsilon(\sigma_r(\bi))T_r+\tfrac{1}{X_r-X_{r+1}}(\epsilon(\sigma_r(\bi))-\epsilon(\bi)) & \text{in $\mathcal{L}$},\\
 \epsilon(\sigma_r(\bi))T_r+\tfrac{(q-1)X_{r+1}}{X_r-X_{r+1}}(\epsilon(\sigma_r(\bi))-\epsilon(\bi)) & \text{in $\mathcal{L}_{q}$}.
 \end{cases}
 \end{equation}

\subsection{The Brundan-Kleshchev auxiliary elements}
 Recall that \cite[(3.12), (4.21)]{Brundan-Kleshchev09}, for each $1\leq r\leq n$, Brundan and Kleshchev introduced the element 
\begin{equation} \label{yX} y_r:=\begin{cases}\sum_{\bi\in \mcC}(X_r-i_r)\epsilon(\bi)&\text{in $\mathcal{L}$},\\
	\sum_{\bi\in \mcC}(1-q^{-i_r}X_r)\epsilon(\bi)&\text{in $\mathcal{L}_{q}$}.
	\end{cases} 
	\end{equation} Then $y_r\epsilon(\bi)=\epsilon(\bi)y_r$ by \ref{Xepis}, and $y_r$ is a unit in $\mathcal{L}$ with $$y_r^{-1}=\begin{cases}\sum_{\bi\in \mcC}(X_r-i_r)^{-1}\epsilon(\bi)&\text{in $\mathcal{L}$},\\
	\sum_{\bi\in \mcC}(1-q^{-i_r}X_r)^{-1}\epsilon(\bi)&\text{in $\mathcal{L}_{q}$}.
	\end{cases} $$
Let $\Bbbk[y]$ be the polynomial ring with $y:=y_1,y_2,\cdots,y_n$ and $\Bbbk(y)$ be the rational function field. 
\begin{lemma}\label{lem:K(X)=K(y)} $\Bbbk(X)\otimes_\Bbbk\mathcal{E}=\Bbbk(y)\otimes_\Bbbk\mathcal{E}$ in $\mathcal{L}$ or $\mathcal{L}_q$ as subalgebras. 
\end{lemma}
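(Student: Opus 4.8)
The plan is to show the two subalgebras coincide by exhibiting each generator of one inside the other. Since both $\Bbbk(X)\otimes_\Bbbk\mathcal{E}$ and $\Bbbk(y)\otimes_\Bbbk\mathcal{E}$ are generated over $\mathcal{E}$ by the elements $X_1,\dots,X_n$ (resp.\ their inverses) and $y_1,\dots,y_n$ respectively, together with the orthogonal idempotents $\epsilon(\bi)$, it suffices to check that each $y_r$ lies in $\Bbbk(X)\otimes_\Bbbk\mathcal{E}$ and each $X_r^{\pm1}$ lies in $\Bbbk(y)\otimes_\Bbbk\mathcal{E}$. The first inclusion is immediate from the defining formula \eqref{yX}: $y_r=\sum_{\bi\in\mcC}(X_r-i_r)\epsilon(\bi)$ in $\mathcal{L}$ (resp.\ $y_r=\sum_{\bi\in\mcC}(1-q^{-i_r}X_r)\epsilon(\bi)$ in $\mathcal{L}_q$) is manifestly an element of $\Bbbk[X]\otimes_\Bbbk\mathcal{E}\subseteq\Bbbk(X)\otimes_\Bbbk\mathcal{E}$.

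For the reverse inclusion I would invert these relations idempotent-block by idempotent-block. In the degenerate case, multiplying $y_r=\sum_{\bj}(X_r-j_r)\epsilon(\bj)$ by $\epsilon(\bi)$ and using \eqref{epsilon} and \eqref{Xepis} gives $y_r\epsilon(\bi)=(X_r-i_r)\epsilon(\bi)$, hence $X_r\epsilon(\bi)=(y_r+i_r)\epsilon(\bi)$; summing over $\bi\in\mcC$ yields $X_r=\sum_{\bi\in\mcC}(y_r+i_r)\epsilon(\bi)\in\Bbbk[y]\otimes_\Bbbk\mathcal{E}$. In the non-degenerate case the same computation gives $y_r\epsilon(\bi)=(1-q^{-i_r}X_r)\epsilon(\bi)$, so $X_r\epsilon(\bi)=q^{i_r}(1-y_r)\epsilon(\bi)$ and $X_r=\sum_{\bi\in\mcC}q^{i_r}(1-y_r)\epsilon(\bi)$, while $X_r^{-1}\epsilon(\bi)=q^{-i_r}(1-y_r)^{-1}\epsilon(\bi)$ makes sense since $1-y_r$ acts invertibly in each block (its inverse in that block being $\sum_{\bi}(1-q^{-i_r}X_r)^{-1}\epsilon(\bi)\cdot$, which is the unit $y_r^{-1}$ recorded after \eqref{yX}); this shows $X_r^{-1}\in\Bbbk(y)\otimes_\Bbbk\mathcal{E}$ as well. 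Thus all generators of one subalgebra lie in the other, and since both contain $\mathcal{E}$, the two subalgebras are equal.

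One small technical point worth spelling out rather than glossing over is that these are genuinely equalities of \emph{subalgebras} of $\mathcal{L}$ (or $\mathcal{L}_q$), so one must be a little careful that the tensor-product notation $\Bbbk(X)\otimes_\Bbbk\mathcal{E}$ really denotes the image of the multiplication map in $\mathcal{L}$ — i.e.\ the subalgebra generated by the commuting families $\Bbbk(X)$ (via $\mcH_F\hookrightarrow\mathcal{L}$) and $\mathcal{E}$ — and that $\Bbbk(y)$ likewise means the subfield-like object generated by the $y_r$; the relations \eqref{Xepis} guarantee the $X_r$ commute with the $\epsilon(\bi)$, and the formula for $y_r^{-1}$ guarantees $\Bbbk(y)\otimes_\Bbbk\mathcal{E}$ is closed under the relevant inversions so that it is indeed a subalgebra. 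The main (and only real) obstacle is therefore not any deep computation but simply making precise what the statement asserts; once the idempotent decomposition is invoked, the verification is the short linear-algebraic manipulation above.
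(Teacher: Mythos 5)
The reduction at the start of your argument is where the proof breaks down. Here $\Bbbk(X)$ and $\Bbbk(y)$ are rational function \emph{fields}, so $\Bbbk(X)\otimes_\Bbbk\mathcal{E}$ is not generated as a $\Bbbk$-algebra by $X_1^{\pm1},\dots,X_n^{\pm1}$ and the $\epsilon(\bi)$ --- it contains $f(X)g(X)^{-1}\epsilon(\bi)$ for \emph{every} nonzero $g$ --- and likewise for $\Bbbk(y)\otimes_\Bbbk\mathcal{E}$. Checking that each $y_r$ lies in $\Bbbk(X)\otimes_\Bbbk\mathcal{E}$ and each $X_r^{\pm1}$ lies in $\Bbbk(y)\otimes_\Bbbk\mathcal{E}$ therefore only establishes the (easy) equality of the Laurent-polynomial subalgebras, not the asserted equality of the rational ones. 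The missing step, which is the real content of the lemma and of the paper's proof, is this: for an arbitrary nonzero $g(y)\in\Bbbk[y]$ one has $g(y)=\sum_{\bi\in\mcC}g(X_1-i_1,\dots,X_n-i_n)\epsilon(\bi)$, and since each substitution $X_s\mapsto X_s-i_s$ is an automorphism of $\Bbbk[X]$, every component $g(X_1-i_1,\dots,X_n-i_n)$ is a nonzero polynomial, hence invertible in $\Bbbk(X)$. Because invertibility in $\Bbbk(X)\otimes_\Bbbk\mathcal{E}$ (a product of fields indexed by $\mcC$) is a componentwise condition, $g(y)^{-1}=\sum_{\bi\in\mcC}g(X_1-i_1,\dots,X_n-i_n)^{-1}\epsilon(\bi)$ exists in $\Bbbk(X)\otimes_\Bbbk\mathcal{E}$. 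This is what makes $\Bbbk(y)\otimes_\Bbbk\mathcal{E}$ a well-defined subalgebra of $\mathcal{L}$ at all and yields $\Bbbk(y)\otimes_\Bbbk\mathcal{E}\subseteq\Bbbk(X)\otimes_\Bbbk\mathcal{E}$; the reverse inclusion follows symmetrically from $X_r=\sum_{\bi\in\mcC}(y_r+i_r)\epsilon(\bi)$ (resp.\ $X_r=\sum_{\bi\in\mcC}q^{i_r}(1-y_r)\epsilon(\bi)$). Your block-by-block change of variables is exactly the right computation; what is absent is the inversion of general denominators, which cannot be delegated to a ``generators'' argument.

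A smaller symptom of the same issue: in the non-degenerate case you justify $(1-y_r)^{-1}\epsilon(\bi)$ by pointing to $\sum_{\bi\in\mcC}(1-q^{-i_r}X_r)^{-1}\epsilon(\bi)$, but that element is $y_r^{-1}$, not $(1-y_r)^{-1}$; in fact $(1-y_r)\epsilon(\bi)=q^{-i_r}X_r\epsilon(\bi)$, so the inverse you want is $q^{i_r}X_r^{-1}\epsilon(\bi)$. The correct justification is again the general blockwise-nonvanishing principle above, applied to the nonzero polynomial $1-y_r$.
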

\begin{proof} For the degenerate case, if $g(y)$ is a polynomial in $\Bbbk[y]$, then $$g(y)=\sum_{\bi \in \mcC}g(y)\epsilon(\bi)=\sum_{\bi \in \mcC}g(X_1-i_1,\cdots,X_n-i_n)\epsilon(\bi)$$
	in $\mathcal{L}(\mcH,F,\mcC)$. Therefore, if $g(y)\neq 0$, then $g(X_1-i_1,\cdots,X_n-i_n)\neq 0$ in $\Bbbk[X]$ for all $\bi\in \mcC$. Thus $g(y)^{-1}=\sum_{\bi \in \mcC}g(X_1-i_1,\cdots,X_n-i_n)^{-1}\epsilon(\bi)$ exists in $\mathcal{L}$. So $\Bbbk(y)\otimes_\Bbbk\mathcal{E}$ is a subalgebra of $\mathcal{L}$.
	
	Combining (\ref{yX}) with $ X_r=\sum_{\bi\in \mcC}(y_r+i_r)\epsilon(\bi)$,	we know that $\Bbbk(X)\otimes_\Bbbk\mathcal{E}=\Bbbk(y)\otimes_\Bbbk\mathcal{E}$ in $\mathcal{L}$.
	
Imitate the proof above, we can prove the non-degenerate case.
\end{proof}

For each $1\leq r<n$, Brundan and Kleshchev defined the element
\begin{equation*}\label{deqr(i)} Q_r(\bi)=\begin{cases} 1+y_{r+1}-y_r & \text{if $i_r=i_{r+1}$},\\
1& \text{if $i_r-i_{r+1}=1, e\neq 2$},\\
\tfrac{2+y_{r+1}-y_r}{(1+y_{r+1}-y_r)^{2}}& \text{if $i_r-i_{r+1}=-1, e\neq 2$},\\
\tfrac{1}{1+y_{r+1}-y_{r}}& \text{if $i_r-i_{r+1}=1, e=2$},\\
1-\tfrac{1}{y_r-y_{r+1}+i_r-i_{r+1}}& \text{if $i_r-i_{r+1}\neq 0, \pm 1$}
\end{cases}
\end{equation*}
in $\mathcal{L}$,
and
\begin{equation*}\label{Qr(i)} Q_r(\bi)=\begin{cases} 1-q+qy_{r+1}-y_r & \text{if $i_r=i_{r+1}$},\\
q^{i_r} & \text{if $i_r-i_{r+1}=1, e\neq 2$},\\
\frac{1-q^2+q^2y_{r+1}-y_r}{q^{i_{r}}(1-q+qy_{r+1}-y_r)^{2}}& \text{if $i_r-i_{r+1}=-1, e\neq 2$},\\
\frac{1}{1-q+qy_{r+1}-y_r} & \text{if $i_r-i_{r+1}=1, e=2$},\\
\frac{q^{i_r-i_{r+1}}(1-y_r)-q(1-y_{r+1})}{q^{i_r-i_{r+1}}(1-y_r)-(1-y_{r+1})}& \text{if $i_r-i_{r+1}\neq 0, \pm 1$},
\end{cases}
\end{equation*}
in $\mathcal{L}_q$.

Notice that the symmetric group $S_n$ acts on the left on $\Bbbk[y]$ by permuting variables:
	$$\sigma_r(y_s)=\begin{cases}\sum_{\bi\in \mcC}(\sigma_r(X_s)-\bi_s)\epsilon(\sigma_r(\bi))&\text{in $\mathcal{L}$}\\
	\sum_{\bi\in \mcC}(1-q^{-i_r}\sigma_r(X_s))\epsilon(\sigma_r(\bi))&\text{in $\mathcal{L}_{q}$}
	\end{cases} =y_{\sigma_r(s)},$$
 where $1\leq r<n$ and $1\leq s\leq n$. 

The following Lemma is proved in \cite[(3.27)-(3.29), (4.33)-(4.35)]{Brundan-Kleshchev09}.
\begin{lemma} \label{lem:QsrQsr}Let $1\leq r<n$ and $\bi\in \mcC$. Then
	\begin{equation*} \sigma_r(Q_r(\sigma_r(\bi)))Q_r(\bi)=\begin{cases}1-(y_{r+1}-y_{r})^2 & \text{if $i_r-i_{r+1}=0$};\\
	\frac{2+y_r-y_{r+1}}{(1+y_r-y_{r+1})^{2}}	&\text{if $i_r-i_{r+1}=1, e\neq 2$};\\
	\frac{2+y_{r+1}-y_r}{(1+y_{r+1}-y_r)^{2}}	&\text{if $i_r-i_{r+1}=-1, e\neq2$};\\
	\frac{1}{(1+y_{r+1}-y_r)^{2}}	&\text{if $i_r-i_{r+1}=1, e=2$};\\
	1-\frac{1}{(y_r-y_{r+1}+i_r-i_{r+1})^{2}}& \text{if $i_r-i_{r+1}\neq 0, \pm 1$}.
	\end{cases}
	\end{equation*}
	in $\mathcal{L}$, and in $\mathcal{L}_q$,
	\begin{align*} &\sigma_r(Q_r(\sigma_r(\bi)))Q_r(\bi)\notag\\
	&=\begin{cases}(1-q+qy_{r+1}-y_r)(1-q+qy_r-y_{r+1}) & \text{if $i_r=i_{r+1}$};\\
	\tfrac{q(1-q^2+q^2y_r-y_{r+1})}{(1-q+qy_r-y_{r+1})^{2}}&\text{if $i_r-i_{r+1}=1, e\neq 2$};\\
	\tfrac{q(1-q^2+q^2y_{r+1}-y_{r})}{(1-q+qy_{r+1}-y_{r})^{2}}&\text{if $i_r-i_{r+1}=-1, e\neq2$};\\
	\tfrac{1}{(1-q+qy_r-y_{r+1})(1-q+qy_{r+1}-y_r)}&\text{if $i_r-i_{r+1}=1, e=2$};\\
	q-\tfrac{(1-q)^2q^{i_r+i_{r+1}}(1-y_{r+1})(1-y_r)}{[q^{i_{r+1}}(1-y_{r+1})-q^{i_r}(1-y_r)]^2}& \text{if $i_r\neq 0, \pm 1$}.
	\end{cases}
	\end{align*}

\end{lemma}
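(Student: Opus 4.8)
The plan is to verify the asserted formula for the product $\sigma_r(Q_r(\sigma_r(\bi)))Q_r(\bi)$ by a direct case-by-case computation, treating each branch of the definition of $Q_r(\bi)$ separately and keeping track of how the $S_n$-action on $\Bbbk[y]$ interchanges $y_r$ and $y_{r+1}$. The key observation, established just before the statement, is that $\sigma_r(y_r)=y_{r+1}$ and $\sigma_r(y_{r+1})=y_r$, while $\sigma_r(y_s)=y_s$ for $s\neq r,r+1$; moreover $\sigma_r$ acts on $\mcC$ so that $\sigma_r(\bi)$ has its $r$-th and $(r+1)$-th entries swapped. Thus the case of $Q_r(\bi)$ governed by $i_r-i_{r+1}=c$ is paired with the case of $Q_r(\sigma_r(\bi))$ governed by $i_r-i_{r+1}=-c$, and after applying $\sigma_r$ to the latter we obtain a rational expression in $y_r,y_{r+1}$ (and the integers $i_r,i_{r+1}$) that we simply multiply with $Q_r(\bi)$.

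First I would dispose of the degenerate case. For $i_r=i_{r+1}$ one has $Q_r(\bi)=1+y_{r+1}-y_r$ and $Q_r(\sigma_r(\bi))=1+y_{r+1}-y_r$ as well (same case), so $\sigma_r$ sends the latter to $1+y_r-y_{r+1}$, and the product is $(1+y_{r+1}-y_r)(1+y_r-y_{r+1})=1-(y_{r+1}-y_r)^2$. For $i_r-i_{r+1}=1$ with $e\neq2$: $Q_r(\bi)=1$, while $\sigma_r(\bi)$ falls in the case $i_r-i_{r+1}=-1$, so $Q_r(\sigma_r(\bi))=\tfrac{2+y_{r+1}-y_r}{(1+y_{r+1}-y_r)^2}$ and applying $\sigma_r$ yields $\tfrac{2+y_r-y_{r+1}}{(1+y_r-y_{r+1})^2}$; multiplying by $1$ gives the claimed answer. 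The case $i_r-i_{r+1}=-1$ is symmetric, and the case $i_r-i_{r+1}=1$ with $e=2$ is handled the same way using the fourth branch (note that when $e=2$ the condition $i_r-i_{r+1}=1$ is equivalent to $i_r-i_{r+1}=-1$, so $Q_r(\sigma_r(\bi))$ is given by the same branch, and $\sigma_r\bigl(\tfrac{1}{1+y_{r+1}-y_r}\bigr)\cdot\tfrac{1}{1+y_{r+1}-y_r}=\tfrac{1}{(1+y_{r+1}-y_r)^2}$ after using $1+y_r-y_{r+1}=-(1+y_{r+1}-y_r)$ modulo the $e=2$ identification, or rather after a short simplification). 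For $i_r-i_{r+1}\neq0,\pm1$: set $a=i_r-i_{r+1}$; then $Q_r(\bi)=1-\tfrac{1}{y_r-y_{r+1}+a}$ and $Q_r(\sigma_r(\bi))=1-\tfrac{1}{y_r-y_{r+1}-a}$, whose $\sigma_r$-image is $1-\tfrac{1}{y_{r+1}-y_r-a}=1+\tfrac{1}{y_r-y_{r+1}+a}$; the product telescopes to $1-\tfrac{1}{(y_r-y_{r+1}+a)^2}$.

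For the non-degenerate case the computation has the same shape but with the $q$-deformed expressions; here it is cleanest to abbreviate $u=q^{i_r}(1-y_r)$ and $v=q^{i_{r+1}}(1-y_{r+1})$ (noting $\sigma_r$ swaps $u$ and $v$ up to the factors $q^{i_r},q^{i_{r+1}}$ which $\sigma_r$ fixes since they are scalars — more precisely $\sigma_r(u)=q^{i_r}(1-y_{r+1})$, so one should carry the clearing of denominators carefully). In the generic branch $i_r-i_{r+1}\neq0,\pm1$ one has $Q_r(\bi)=\tfrac{q^{a}(1-y_r)-q(1-y_{r+1})}{q^{a}(1-y_r)-(1-y_{r+1})}$ with $a=i_r-i_{r+1}$, and the partner term from $\sigma_r(\bi)$ uses $-a$; after applying $\sigma_r$ and multiplying, the numerator and denominator combine into $q-\tfrac{(1-q)^2 q^{i_r+i_{r+1}}(1-y_{r+1})(1-y_r)}{[q^{i_{r+1}}(1-y_{r+1})-q^{i_r}(1-y_r)]^2}$ — this is the one genuinely tedious identity, and verifying it is the main obstacle: it is an identity of rational functions in $q,y_r,y_{r+1}$ that requires expanding a product of two fractions and recognising the result, so I would clear denominators and check the polynomial identity directly (or cite \cite[(4.33)--(4.35)]{Brundan-Kleshchev09} verbatim, since the statement explicitly defers to that reference). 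The remaining non-degenerate branches ($i_r=i_{r+1}$; $i_r-i_{r+1}=\pm1$ with $e\neq2$; $i_r-i_{r+1}=1$ with $e=2$) are each one-line products of the two relevant expressions after applying $\sigma_r$, entirely parallel to the degenerate computations above, and I would present them compactly. Since every identity here is an equality of elements of the commutative subalgebra $\Bbbk(y)\otimes_\Bbbk\mathcal E$ of $\mathcal L$ (resp.\ $\mathcal L_q$) — which by Lemma \ref{lem:K(X)=K(y)} is just $\Bbbk(X)\otimes_\Bbbk\mathcal E$ — there are no ordering subtleties, and the whole proof reduces to elementary rational-function algebra.
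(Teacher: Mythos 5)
Your case-by-case verification is correct (I checked each branch, including the generic non-degenerate one, which does reduce to the polynomial identity you describe after clearing denominators, and the $e=2$ branches, where the point is that $-1=1$ makes the two factors coincide). The paper itself offers no computation and simply cites \cite[(3.27)--(3.29), (4.33)--(4.35)]{Brundan-Kleshchev09}, so your direct argument is exactly what that citation stands in for; the only cosmetic issue is the slightly garbled parenthetical in the degenerate $e=2$ case, where the clean statement is that $1+y_r-y_{r+1}=1+y_{r+1}-y_r$ in characteristic $2$.
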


Surprisingly, either in $\mathcal{L}$ or in $\mathcal{L}_q$, we can set the same named elements $$\theta_r:=\kappa_r\sum_{\bi\in \mcC}Q_r^{-1}(\bi)\epsilon(\bi),$$
 which share the same relations. 
\begin{lemma} For each $1\leq r<n$ and $\bi\in \mcC$, we have
		\begin{align} \label{thetaepsilon} \theta_r\epsilon(\bi)&=\epsilon(\sigma_r(\bi))\theta_r,\\
	\label{ftheta} f\theta_r&=\theta_r\sigma_r(f) \hspace{11mm}\forall f\in \Bbbk(y),\\
	\label{theta2}\theta_r\theta_s&=\theta_s\theta_r \hspace{19mm} \text{if $s\neq r,r+1$},\\
	\label{theta3}\theta_r\theta_{r+1}\theta_r&=\theta_{r+1}\theta_r\theta_{r+1},
	\end{align}
	\begin{equation}\label{theta^2}\theta_r^2\epsilon(\bi)=\begin{cases}-\frac{1}{(y_{r+1}-y_r)^2}\epsilon(\bi) & \text{if $i_r=i_{r+1}$},\\
	(y_r-y_{r+1})\epsilon(\bi) &\text{if $i_r-i_{r+1}=1, e\neq 2$},\\
	(y_{r+1}-y_r)\epsilon(\bi) & \text{if $i_r-i_{r+1}=-1, e\neq 2$},\\
	(y_r-y_{r+1})(y_{r+1}-y_r)\epsilon(\bi)& \text{if $i_r-i_{r+1}=1, e=2$},\\
	\epsilon(\bi)& \text{if $i_r-i_{r+1}\neq0,\pm1$}.
	\end{cases}
	\end{equation}
\end{lemma}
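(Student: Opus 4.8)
Here is how I would approach the proof.

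The plan is to factor each generator as $\theta_r=\kappa_rA_r$, where $A_r:=\sum_{\bi\in\mcC}Q_r^{-1}(\bi)\epsilon(\bi)$ lies in the subalgebra $\Bbbk(y)\otimes_\Bbbk\mathcal{E}$. By Lemma~\ref{lem:K(X)=K(y)} this subalgebra equals $\Bbbk(X)\otimes_\Bbbk\mathcal{E}$, and it is commutative, since $\Bbbk(X)$ and $\mathcal{E}$ are commutative and commute with each other by \eqref{Xepis}. Two facts will carry all the work. First, each $\kappa_r$ is a unit of $\mathcal{L}$ (resp.\ $\mathcal{L}_q$), because $\kappa_r^2$ is the nonzero rational function in \eqref{kappa^2}. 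Second, conjugation by $\kappa_r$ restricts to the algebra automorphism $\sigma_r$ of $\Bbbk(y)\otimes_\Bbbk\mathcal{E}$ that interchanges $y_r\leftrightarrow y_{r+1}$, fixes the remaining $y_s$, and sends $\epsilon(\bi)\mapsto\epsilon(\sigma_r(\bi))$; that is, $\kappa_rg=\sigma_r(g)\kappa_r$ for every $g$ in this subalgebra, equivalently $g\kappa_r=\kappa_r\sigma_r(g)$ since $\sigma_r^2=\mathrm{id}$ there. This second fact is immediate from \eqref{fkappa}, \eqref{kappaepsi}, the definition \eqref{yX}, and the identity $\sigma_r(y_s)=y_{\sigma_r(s)}$. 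I shall also use, again from \eqref{yX}, that $X_r\epsilon(\bi)=(y_r+i_r)\epsilon(\bi)$ in $\mathcal{L}$ and $X_r\epsilon(\bi)=q^{i_r}(1-y_r)\epsilon(\bi)$ in $\mathcal{L}_q$, so that any rational function in the $X$'s, when multiplied by an idempotent $\epsilon(\bi)$, becomes a rational function in the $y$'s. From here on elements of $\Bbbk(y)\otimes_\Bbbk\mathcal{E}$ are moved past the $\kappa_r$'s by these rules without further comment.

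Next I would dispatch the four easy relations. For \eqref{thetaepsilon}: $\theta_r\epsilon(\bi)=\kappa_rQ_r^{-1}(\bi)\epsilon(\bi)$ by orthogonality of the $\epsilon(\bj)$, while $\epsilon(\sigma_r(\bi))\theta_r=\kappa_r\epsilon(\bi)A_r=\kappa_rQ_r^{-1}(\bi)\epsilon(\bi)$ by \eqref{kappaepsi}. For \eqref{ftheta}, given $f\in\Bbbk(y)$: $f\theta_r=f\kappa_rA_r=\kappa_r\sigma_r(f)A_r=\kappa_rA_r\sigma_r(f)=\theta_r\sigma_r(f)$, the middle equality by commutativity. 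For \eqref{theta2} with $s\neq r,r+1$: the automorphism $\sigma_s$ fixes $A_r$ (it permutes only $y_s,y_{s+1}$ and the $s,s{+}1$-components of the tuples, all disjoint from the data $y_r,y_{r+1},i_r,i_{r+1}$ determining $A_r$), and symmetrically $\sigma_r$ fixes $A_s$, so $\theta_r\theta_s=\kappa_rA_r\kappa_sA_s=\kappa_r\kappa_sA_rA_s=\kappa_s\kappa_rA_sA_r=\theta_s\theta_r$ by \eqref{kappa2} and commutativity. For \eqref{theta^2}, pushing $A_r$ across the inner $\kappa_r$ gives $\theta_r^2=\kappa_r^2\,\sigma_r(A_r)A_r$, and multiplying out one finds $\sigma_r(A_r)A_r=\sum_{\bi\in\mcC}\bigl(\sigma_r(Q_r(\sigma_r(\bi)))Q_r(\bi)\bigr)^{-1}\epsilon(\bi)$, whence
\[
\theta_r^2\epsilon(\bi)=\kappa_r^2\bigl(\sigma_r(Q_r(\sigma_r(\bi)))Q_r(\bi)\bigr)^{-1}\epsilon(\bi).
\]
Substituting the value of $\sigma_r(Q_r(\sigma_r(\bi)))Q_r(\bi)$ from Lemma~\ref{lem:QsrQsr} and rewriting $\kappa_r^2$ on $\epsilon(\bi)$ via the substitutions of the first paragraph, each of the five cases collapses to the asserted value after an elementary cancellation; for instance, when $i_r=i_{r+1}$ one gets $\kappa_r^2\epsilon(\bi)=\bigl(1-(y_r-y_{r+1})^{-2}\bigr)\epsilon(\bi)$, and multiplying by $\bigl(1-(y_{r+1}-y_r)^2\bigr)^{-1}$ leaves $-(y_{r+1}-y_r)^{-2}\epsilon(\bi)$.

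The braid relation \eqref{theta3} is the one substantial point. Pushing the three $\kappa$'s to the left using the commutation rule of the first paragraph, I would obtain
\[
\theta_r\theta_{r+1}\theta_r=(\kappa_r\kappa_{r+1}\kappa_r)\,\sigma_r\sigma_{r+1}(A_r)\,\sigma_r(A_{r+1})\,A_r
\]
and, symmetrically, $\theta_{r+1}\theta_r\theta_{r+1}=(\kappa_{r+1}\kappa_r\kappa_{r+1})\,\sigma_{r+1}\sigma_r(A_{r+1})\,\sigma_{r+1}(A_r)\,A_{r+1}$. By \eqref{kappa3} the two $\kappa$-prefactors are equal and, being products of units, invertible; cancelling them reduces \eqref{theta3} to the single identity
\[
\sigma_r\sigma_{r+1}(A_r)\,\sigma_r(A_{r+1})\,A_r=\sigma_{r+1}\sigma_r(A_{r+1})\,\sigma_{r+1}(A_r)\,A_{r+1}
\]
in the commutative algebra $\Bbbk(y)\otimes_\Bbbk\mathcal{E}$. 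This is where the genuine computation sits, and I expect it to be the main obstacle — everything before it is formal bookkeeping. Multiplying by each $\epsilon(\bi)$ turns the identity into an equality of rational functions in $y_r,y_{r+1},y_{r+2}$ whose shape depends only on $i_r-i_{r+1}$ and $i_{r+1}-i_{r+2}$ modulo $e$, so it comes down to a finite case check; this is precisely the commutative counterpart of the braid computations already performed in \cite{Brundan-Kleshchev09}, and I would carry it out the same way. Since the commutation rules of the first paragraph and the formulas of Lemma~\ref{lem:QsrQsr} hold verbatim in both $\mathcal{L}$ and $\mathcal{L}_q$, the degenerate and non-degenerate cases are settled simultaneously.
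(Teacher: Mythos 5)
Your factorization $\theta_r=\kappa_rA_r$ with $A_r=\sum_{\bi\in\mcC}Q_r^{-1}(\bi)\epsilon(\bi)$, together with the rule that conjugation by $\kappa_r$ acts as $\sigma_r$ on the commutative subalgebra $\Bbbk(y)\otimes_\Bbbk\mathcal{E}$, is exactly what the paper's one-line proof appeals to, and your verifications of \eqref{thetaepsilon}, \eqref{ftheta}, \eqref{theta2} and \eqref{theta^2} are correct. (Minor caveat: for \eqref{theta2} the condition must be read as $|r-s|\geq 2$, as in Lusztig's original version of \eqref{kappa2}; your disjointness argument, like \eqref{kappa2} as printed, does not cover $s=r-1$.) The one place you stop short is the braid relation \eqref{theta3}: your reduction to the identity $\sigma_r\sigma_{r+1}(A_r)\,\sigma_r(A_{r+1})\,A_r=\sigma_{r+1}\sigma_r(A_{r+1})\,\sigma_{r+1}(A_r)\,A_{r+1}$ in $\Bbbk(y)\otimes_\Bbbk\mathcal{E}$ is correct, but this identity is not ``the main obstacle'' requiring a Brundan--Kleshchev-style case analysis; it holds for free. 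Since every case of the definition of $Q_s(\bj)$ involves only the entries $j_s,j_{s+1}$ and the variables $y_s,y_{s+1}$, uniformly in $s$, one may write $Q_s(\bj)=q_{j_s,j_{s+1}}(y_s,y_{s+1})$ for a fixed family of two-variable rational functions. Applying your identity to $\epsilon(\bi)$ with $(i_r,i_{r+1},i_{r+2})=(a,b,c)$ and chasing the permutations of entries and of variables, the left-hand side becomes $q_{b,c}(y_{r+1},y_{r+2})^{-1}q_{a,c}(y_r,y_{r+2})^{-1}q_{a,b}(y_r,y_{r+1})^{-1}$ while the right-hand side consists of the same three factors in the reverse order, so the two sides agree by commutativity with no case distinction on $i_r-i_{r+1}$ at all; the genuinely laborious braid computations in Brundan--Kleshchev concern $\psi_r$, where real correction terms appear (cf.\ \eqref{psi3}), not $\theta_r$. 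With that one observation supplied, your proof is complete and coincides with the argument the paper intends.
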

\begin{proof}
	The assertions follow straightforwardly from (\ref{fkappa})-(\ref{kappa3}), (\ref{thetaepsilon}), Lemma \ref{lem:QsrQsr} and (\ref{kappa^2}). 
\end{proof}

\subsection{The KLR basis of the Lusztig extensions} For each $1\leq r<n$, set \begin{equation} \label{psitheta}\psi_r=\sum_{\bi\in \mcC}[\theta_r-\tfrac{\delta_{i_r}^{i_{r+1}}}{y_r-y_{r+1}}]\epsilon(\bi).
\end{equation}
\begin{proposition} \label{prop:KLR basis} The Lusztig extension $\mathcal{L}$ or $\mathcal{L}_q$ is generated by $$\{y_1,\cdots,y_n,\psi_1,\cdots,\psi_{n-1},f^{-1}, \epsilon(\bi)\ | \ 0\neq f\in \Bbbk[y], \bi\in \mcC\}$$ subject to the following relations.
	\begin{align}\label{ff^{-1}} ff^{-1}&=f^{-1}f=1, \forall \quad 0\neq f\in \Bbbk[y];\\
		\label{yepsilon}y_r\epsilon(\bi)&=\epsilon(\bi)y_r, \quad y_ry_s=y_sy_r;\\	
	\label{psiepsilon(i)} \psi_r\epsilon(\bi)&=\epsilon(\sigma_r(\bi))\psi_r;\\
	\label{psiyepsilon(i)}\psi_ry_s\epsilon(\bi)&=[\sigma_r(y_s)\psi_r+\delta_{i_{r+1}}^{i_r}\parti_r(y_s)]\epsilon(\bi);\\
	\label{psi^2}\psi_r^2\epsilon(\bi)&=\begin{cases}0 & \text{if $i_r-i_{r+1}=0$},\\
	(y_r-y_{r+1})\epsilon(\bi) &\text{if $i_r-i_{r+1}=1, e\neq 2$},\\
	(y_{r+1}-y_r)\epsilon(\bi) & \text{if $i_r-i_{r+1}=-1, e\neq 2$},\\
	-(y_r-y_{r+1})^2\epsilon(\bi)& \text{if $i_r-i_{r+1}=1, e=2$},\\
	\epsilon(\bi)& \text{if $i_r\neq0,\pm1$}.
	\end{cases}\\
	\psi_r\psi_s&=\psi_s\psi_r,\hspace{26mm} \text{if $s\neq r,r+1$};
	\end{align}
	\begin{align}
	\label{psi3}&(\psi_r\psi_{r+1}\psi_r-\psi_{r+1}\psi_r\psi_{r+1})\epsilon(\bi)\notag\\
	&=\begin{cases}-\epsilon(\bi) & \text{if $i_{r+2}=i_r, i_r-i_{r+1}=1, e\neq 2$},\\
	\epsilon(\bi) &\text{if $i_{r+2}=i_r, i_{r}-i_{r+1}=-1, e\neq 2$},\\
	-(y_r+y_{r+2}-2y_{r+1})\epsilon(\bi) & \text{if $i_{r+2}=i_r, i_{r}-i_{r+1}=1, e=2$},\\
	0 & \text{else}.
	\end{cases}
	\end{align}	
\end{proposition}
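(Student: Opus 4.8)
The plan is to establish Proposition \ref{prop:KLR basis} in two stages: first verifying that the listed elements do generate $\mathcal{L}$ (resp. $\mathcal{L}_q$) and satisfy the stated relations, and then checking that these relations are \emph{defining}, i.e. that the abstract algebra presented by these generators and relations is no larger than $\mathcal{L}$. For the generation claim, I would observe that $\Bbbk(y)\otimes_\Bbbk\mathcal{E}=\Bbbk(X)\otimes_\Bbbk\mathcal{E}$ by Lemma \ref{lem:K(X)=K(y)}, so the subalgebra generated by $\{y_r, f^{-1}, \epsilon(\bi)\}$ is exactly $\Bbbk(X)\otimes_\Bbbk\mathcal{E}$; then from \eqref{psitheta} and the definition $\theta_r=\kappa_r\sum_\bi Q_r^{-1}(\bi)\epsilon(\bi)$, together with invertibility of $Q_r(\bi)$ in $\Bbbk(y)\otimes_\Bbbk\mathcal{E}$, one recovers each $\kappa_r$ from $\psi_r$ and the rational part, hence all of $\mathcal{L}=\oplus_{w,\bi}\kappa_w\Bbbk(X)\epsilon(\bi)$.

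For the relations, each one should reduce to a short computation using the results already assembled. Relations \eqref{ff^{-1}} and \eqref{yepsilon} are immediate from the definition of $\mathcal{E}$ and \eqref{Xepis}. Relation \eqref{psiepsilon(i)} follows from \eqref{thetaepsilon} and the fact that $\tfrac{\delta_{i_r}^{i_{r+1}}}{y_r-y_{r+1}}$ is fixed by $\sigma_r$ only up to the index swap, which is exactly matched by the sum over $\mcC$. For \eqref{psiyepsilon(i)} I would expand $\psi_r y_s\epsilon(\bi)$ using \eqref{ftheta} to move $\theta_r$ past $y_s$, producing $\sigma_r(y_s)\theta_r$, and then handle the correction term $-\tfrac{\delta_{i_r}^{i_{r+1}}}{y_r-y_{r+1}}y_s$ versus $-\tfrac{\delta_{i_r}^{i_{r+1}}}{y_r-y_{r+1}}\sigma_r(y_s)$, whose difference is $\delta_{i_r}^{i_{r+1}}\parti_r(y_s)$ by the definition of the Demazure operator (noting that when $i_r=i_{r+1}$ the variables $y_r,y_{r+1}$ are a genuine $\sigma_r$-pair). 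For \eqref{psi^2} I would substitute \eqref{psitheta} into $\psi_r^2\epsilon(\bi)$, expand the square, and use \eqref{theta^2} case by case; in the $i_r=i_{r+1}$ case the $-\tfrac{1}{(y_{r+1}-y_r)^2}$ from $\theta_r^2$ cancels against the cross terms and the $\tfrac{1}{(y_r-y_{r+1})^2}$, giving $0$, while in the other cases the correction term vanishes and $\psi_r^2=\theta_r^2$. The commutation $\psi_r\psi_s=\psi_s\psi_r$ for $|r-s|>1$ is inherited directly from \eqref{theta2} since the rational corrections involve disjoint variables.

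The genuinely laborious step is the braid-type relation \eqref{psi3}: one must expand $\psi_r\psi_{r+1}\psi_r-\psi_{r+1}\psi_r\psi_{r+1}$ in terms of $\theta_r,\theta_{r+1}$ and the rational correction terms, use the braid relation \eqref{theta3} to cancel the pure-$\theta$ part, and then carefully collect the remaining terms — which are rational functions in $y_r,y_{r+1},y_{r+2}$ multiplied by a single $\theta$ or by $1$ — and use \eqref{ftheta}, \eqref{theta^2}, and the precise form of $Q_r(\bi)$ to evaluate them. The outcome depends on the triple $(i_r,i_{r+1},i_{r+2})$, and the non-trivial values arise only when $i_{r+2}=i_r$ and $i_r-i_{r+1}=\pm 1$; these are exactly the configurations where a $\theta_r$ or $\theta_{r+1}$ can be turned into a scalar by \eqref{theta^2} after the other $\theta$'s braid through. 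I expect this to be the main obstacle, and the cleanest route is to import the corresponding identities from \cite[(3.27)--(3.29), (4.33)--(4.35)]{Brundan-Kleshchev09} (already recorded in Lemma \ref{lem:QsrQsr}) so that only bookkeeping, not new algebra, remains.

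Finally, to see that the relations are defining, I would note that the monomials $\psi_{w}:=\psi_{r_1}\cdots\psi_{r_m}$ (for a fixed reduced word of each $w\in S_n$) times elements of $\Bbbk(y)\otimes_\Bbbk\mathcal{E}$ span the abstract algebra $A$ presented above — this follows by repeatedly using \eqref{psiyepsilon(i)} to push $y$'s and $f^{-1}$'s to the right and \eqref{psi3} together with the other relations to straighten products of $\psi$'s — so $\dim A\le |S_n|\cdot\dim_{\Bbbk}(\Bbbk(y)\otimes_\Bbbk\mathcal{E})$ in the appropriate sense, matching the basis $\{\kappa_w\Bbbk(X)\epsilon(\bi)\}$ of $\mathcal{L}$. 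Since the surjection $A\twoheadrightarrow\mathcal{L}$ from the first stage sends this spanning set to a basis, it is an isomorphism. The only point requiring care here is that $\psi_r^2$ and the braid corrections, though not scalars, lie in the rational subalgebra times lower monomials, so the straightening terminates; this is guaranteed by the shape of \eqref{psi^2} and \eqref{psi3}.
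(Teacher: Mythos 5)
Your proposal follows essentially the same route as the paper: verify each relation by writing $\psi_r=\theta_r-\sum_\bi\tfrac{\delta_{i_r}^{i_{r+1}}}{y_r-y_{r+1}}\epsilon(\bi)$ and invoking the established properties \eqref{thetaepsilon}--\eqref{theta^2} of the $\theta_r$ (which rest on the intertwiner identities and Lemma \ref{lem:QsrQsr}), and then establish completeness of the relations by showing that the monomials $\psi_w$ over $\Bbbk(y)\otimes_\Bbbk\mathcal{E}$ both span the presented algebra (by straightening) and form a basis of $\mathcal{L}$ (via the $\kappa_w$-basis and Lemma \ref{lem:K(X)=K(y)}). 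The plan is correct and no step of it diverges from the paper's argument in any essential way.
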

\begin{proof} The relations (\ref{ff^{-1}}) and (\ref{yepsilon}) can be proved by straightforward calculations using Lemma \ref{lem:K(X)=K(y)}.
	
	As a result of (\ref{thetaepsilon}) and (\ref{psitheta}), we see that $$\psi_r\epsilon(\bi)=(\theta_r-\tfrac{\delta_{i_{r+1}}^{i_r}}{y_r-y_{r+1}})\epsilon(\bi)=\epsilon(\sigma_r(\bi))(\theta_r-\tfrac{\delta_{i_{r+1}}^{i_r}}{y_r-y_{r+1}})=\epsilon(\sigma_r(\bi))\psi_r,$$
	where the third identity holds since $\sigma_r(\bi)=\bi$ whenever $i_r=i_{r+1}$. 
	
	By (\ref{psitheta}) and (\ref{ftheta}), we obtain
	\begin{align*}\psi_ry_s\epsilon(\bi)&=(\theta_r-\tfrac{\delta_{i_{r+1}}^{i_r}}{y_r-y_{r+1}})y_s\epsilon(\bi)\\
	&=(\sigma_r(y_s)\theta_r-\tfrac{\delta_{i_{r+1}}^{i_r}y_s}{y_r-y_{r+1}})\epsilon(\bi) \\
	&=(\sigma_r(y_s)\psi_r+\tfrac{\delta_{i_{r+1}}^{i_r}\sigma_r(y_s)}{y_r-y_{r+1}}-\tfrac{\delta_{i_{r+1}}^{i_r}y_s}{y_r-y_{r+1}})\epsilon(\bi)\\
	&=(\sigma_r(y_s)\psi_r+\delta_{i_{r+1}}^{i_r}\parti_r(y_s))\epsilon(\bi).
	\end{align*}

	If $s\neq r,r+1$, using (\ref{psiepsilon(i)}), (\ref{psitheta}), (\ref{ftheta}) and (\ref{theta2}), then we have 
	\begin{align*}\psi_r\psi_s\epsilon(\bi)&=\psi_r\epsilon(\sigma_s(\bi))\psi_s\epsilon(\bi)\\
	&=(\theta_r-\tfrac{\delta_{i_{r+1}}^{i_r}}{y_r-y_{r+1}})(\theta_s-\tfrac{\delta_{i_{s+1}}^{i_s}}{y_s-y_{s+1}})e(\bi)\\
	&=(\theta_r\theta_s-\tfrac{\delta_{i_{r+1}}^{i_r}}{y_r-y_{r+1}}\theta_s-\theta_r\tfrac{\delta_{i_{s+1}}^{i_s}}{y_s-y_{s+1}}+\tfrac{\delta_{i_{r+1}}^{i_r}\delta_{i_{s+1}}^{i_s}}{(y_r-y_{r+1})(y_s-y_{s+1})})\epsilon(\bi)\\
	&=(\theta_s\theta_r-\theta_s\tfrac{\delta_{i_{r+1}}^{i_r}}{y_r-y_{r+1}}-\tfrac{\delta_{i_{s+1}}^{i_s}}{y_s-y_{s+1}}\theta_r+\tfrac{\delta_{i_{r+1}}^{i_r}\delta_{i_{s+1}}^{i_s}}{(y_r-y_{r+1})(y_s-y_{s+1})})\epsilon(\bi)\\
	&=(\theta_s-\tfrac{\delta_{i_{s+1}}^{i_s}}{y_s-y_{s+1}})(\theta_r-\tfrac{\delta_{i_{r+1}}^{i_r}}{y_r-y_{r+1}})\epsilon(\bi)	\\
	&=\psi_s\psi_r\epsilon(\bi)
	\end{align*}
	So $\psi_r\psi_s=\psi_s\psi_r$.	
	
	As a result of (\ref{psitheta}), (\ref{psiepsilon(i)}) and Lemma \ref{theta^2}, we get
		\begin{align*}
	\psi_r^2\epsilon(\bi)&=\psi_r\epsilon(\sigma_r(\bi))\psi_r\epsilon(\bi)\\
	&=(\theta_r-\tfrac{\delta_{i_{r}}^{i_{r+1}}}{y_r-y_{r+1}})(\theta_r-\tfrac{\delta_{i_{r+1}}^{i_r}}{y_r-y_{r+1}})\epsilon(\bi)\\
	&=(\theta_r^2+\tfrac{\delta_{i_{r+1}}^{i_r}}{(y_r-y_{r+1})^2})\epsilon(\bi)\\
	&=\begin{cases}0 & \text{if $i_r-i_{r+1}=0$},\\
	(y_r-y_{r+1})\epsilon(\bi) &\text{if $i_r-i_{r+1}=1, e\neq 2$},\\
	(y_{r+1}-y_r)\epsilon(\bi) & \text{if $i_r-i_{r+1}=-1, e\neq 2$},\\
	(y_r-y_{r+1})(y_{r+1}-y_r)\epsilon(\bi)& \text{if $i_r-i_{r+1}=1, e=2$},\\
	\epsilon(\bi)& \text{if $i_r\neq0,\pm1$}.
	\end{cases}
	\end{align*}

	In the light of (\ref{psiepsilon(i)}), (\ref{psitheta}) and (\ref{ftheta}), there holds that
	\begin{align*}&\psi_r\psi_{r+1}\psi_r\epsilon(\bi)\\
	&=(\theta_r-\tfrac{\delta_{i_{r+2}}^{i_{r+1}}}{y_r-y_{r+1}})(\theta_{r+1}-\tfrac{\delta_{i_{r+2}}^{i_{r}}}{y_{r+1}-y_{r+2}})(\theta_r-\tfrac{\delta_{i_{r+1}}^{i_{r}}}{y_r-y_{r+1}})\epsilon(\bi)\\
	&=[\theta_r\theta_{r+1}\theta_r-\tfrac{\delta_{i_{r+1}}^{i_{r}}}{y_{r+1}-y_{r+2}}\theta_r\theta_{r+1}-\tfrac{\delta_{i_{r+2}}^{i_{r+1}}}{y_r-y_{r+1}}\theta_{r+1}\theta_r+\tfrac{\delta_{i_{r+1}}^{i_{r}}\delta_{i_{r+2}}^{i_{r+1}}}{(y_r-y_{r+2})(y_{r+1}-y_{r+2})}\theta_r\\
	&+\tfrac{\delta_{i_{r+1}}^{i_{r}}\delta_{i_{r+2}}^{i_{r+1}}}{(y_r-y_{r+1})(y_{r}-y_{r+2})}\theta_{r+1}-\tfrac{\delta_{i_{r+2}}^{i_{r}}}{y_{r}-y_{r+2}}\theta_r^2-\tfrac{\delta_{i_{r+1}}^{i_{r}}\delta_{i_{r+2}}^{i_{r}}\delta_{i_{r+2}}^{i_{r+1}}}{(y_r-y_{r+1})^2(y_{r+1}-y_{r+2})}]\epsilon(\bi)
	\end{align*}
	Similarly, we have
	\begin{align*}&\psi_s\psi_r\psi_s\epsilon(\bi)\\
	&=(\theta_{r+1}-\tfrac{\delta_{i_{r+1}}^{i_{r}}}{y_{r+1}-y_{r+2}})(\theta_r-\tfrac{\delta_{i_{r+2}}^{i_{r}}}{y_r-y_{r+1}})(\theta_{r+1}-\tfrac{\delta_{i_{r+2}}^{i_{r+1}}}{y_{r+1}-y_{r+2}})\epsilon(\bi)\\
	&=[\theta_{r+1}\theta_{r}\theta_{r+1}-\tfrac{\delta_{i_{r+1}}^{i_{r}}}{y_{r+1}-y_{r+2}}\theta_r\theta_{r+1}-\tfrac{\delta_{i_{r+2}}^{i_{r+1}}}{y_r-y_{r+1}}\theta_{r+1}\theta_r+\tfrac{\delta_{i_{r+1}}^{i_{r}}\delta_{i_{r+2}}^{i_{r+1}}}{(y_r-y_{r+2})(y_{r+1}-y_{r+2})}\theta_r\\
	&+\tfrac{\delta_{i_{r+1}}^{i_{r}}\delta_{i_{r+2}}^{i_{r+1}}}{(y_r-y_{r+1})(y_{r}-y_{r+2})}\theta_{r+1}-\tfrac{\delta_{i_{r+2}}^{i_{r}}}{y_{r}-y_{r+2}}\theta_{r+1}^2-\tfrac{\delta_{i_{r+1}}^{i_{r}}\delta_{i_{r+2}}^{i_{r}}\delta_{i_{r+2}}^{i_{r+1}}}{(y_{r}-y_{r+1})(y_{r+1}-y_{r+2})^2}]\epsilon(\bi)
	\end{align*}
	By (\ref{theta3}) and \ref{theta^2}, we arrive at 
	\begin{align*}
	&[\psi_r\psi_{r+1}\psi_r-\psi_{r+1}\psi_r\psi_{r+1}]\epsilon(\bi)\\
	&=\delta_{i_{r+2}}^{i_r}[\tfrac{1}{y_r-y_{r+2}}(\theta_{r+1}^2-\theta_r^2)+\tfrac{\delta_{i_{r+1}}^{i_r}(y_r+y_{r+2}-2y_{r+1})}{(y_{r}-y_{r+1})^2(y_{r+1}-y_{r+2})^2}]\epsilon(\bi)\\
	&=\begin{cases}-\epsilon(\bi) & \text{if $i_{r+2}=i_r, i_r-i_{r+1}=1, e\neq 2$},\\
	\epsilon(\bi) &\text{if $i_{r+2}=i_r, i_{r}-i_{r+1}=-1, e\neq 2$},\\
	-(y_r+y_{r+2}-2y_{r+1})\epsilon(\bi) & \text{if $i_{r+2}=i_r, i_{r}-i_{r+1}=1, e=2$},\\
	0 & \text{else}.
	\end{cases}
	\end{align*}

	To finish the proof of the theorem, we need to prove the relations (\ref{ff^{-1}})-(\ref{psi3}) generate all relations. In fact, for each $w\in S_n$, we {\it fix} a reduced decomposition $w=\sigma_{r_1} \sigma_{r_2}\cdots \sigma_{r_m}$ and define the element
	$$\psi_w:=\psi_{r_1}\psi_{r_2}\cdots \psi_{r_m}\in \mathcal{L}.$$
	Note that $\psi_w$ in general does depend on the choice of reduced decomposition of $w$ \cite[Proposition 2.5]{BKW}. Since $$\psi_r=\theta_r-\sum_{\bi\in \mcC}\tfrac{\delta_{i_r}^{i_{r+1}}}{y_r-y_{r+1}}\epsilon(\bi)=\kappa_r\sum_{\bi\in \mcC}Q_r^{-1}(\bi)\epsilon(\bi)-\sum_{\bi\in \mcC}\tfrac{\delta_{i_r}^{i_{r+1}}}{y_r-y_{r+1}}\epsilon(\bi)$$ and $\{\kappa_w\ | \ w\in S_n\}$ is a $\Bbbk(x)$-basis of $\mcH_F$, we can show that $\{\psi_w\ | \ w\in S_n\}$ is a $\Bbbk(y)\otimes_\Bbbk\mathcal{E}$-basis of $\mathcal{L}$ by Lemma \ref{lem:K(X)=K(y)} since every element in $\mathcal{L}$ can be written as $\sum_{w\in S_n,\bi\in \mcC}\psi_wf_{w,\bi}(y)\epsilon(\bi)$ with $f_{w,\bi}(y)$ in $\Bbbk(y)$ by the relations (\ref{ff^{-1}})-(\ref{psi3}). Therefore
	the generating set
	$$\{y_1,\cdots,y_n,\psi_1,\cdots,\psi_{n-1},f^{-1}, \epsilon(\bi)\ | \ 0\neq f\in \Bbbk[y], \bi\in \mcC\}$$
	subject to relations (\ref{ff^{-1}})-(\ref{psi3}) is complete. 	
	
	The same proof above is valid for $\mathcal{L}_q$, and then we are done.	
\end{proof}
According to the Proposition above, we will not distinguish between Lusztig extensions in the sequel.

\subsection{The cyclotomic KLR algebras} Recall \cite[Subsection 2.2]{Brundan-Kleshchev09}, the {\it KLR algebra of type A} is defined to be the $\Bbbk$-algebra $\mcR$ generated by $$\{y_1,\cdots,y_n, \psi_1,\cdots,\psi_n, \epsilon(\bi) \ |\ \bi\in \mcC,\}$$
subject to the relations (\ref{ff^{-1}})-(\ref{psi3}) of Proposition \ref{prop:KLR basis}. Thus $\mcR$ can be viewed as a subalgebra of $\mathcal{L}$. 

From now on, we fix an index $\Lambda=(\Lambda_i)_{i\in I}\in \mathbb{N}^I$ (we follow the convention that $\mathbb{N}=\{0,1,2,\cdots\}$) with $\sum_{i\in I}\Lambda_i<\infty$. We call the quotient
$$\mcR(\Lambda):=\mcR/\langle y_1^{\Lambda_{i_1}}\epsilon(\bi) | \bi\in \mcC \rangle$$
a {\it cyclotomic KLR algebra of type A}.

Recall that, Brund and Kleshchev proved the following result in \cite[Lemma 2.1]{Brundan-Kleshchev09}.
\begin{lemma} \label{lem:nilp of y} The elements $y_r$ are nilpotent in $\mcR(\Lambda)$.
\end{lemma}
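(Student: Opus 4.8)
The plan is to show that the images of the generators $y_1,\dots,y_n$ in the cyclotomic quotient $\mcR(\Lambda)$ are nilpotent, proceeding by induction on the index $r$. The relation $y_1^{\Lambda_{i_1}}\epsilon(\bi)=0$ imposed in the quotient immediately gives that $y_1$ is nilpotent: since $\sum_{\bi}\epsilon(\bi)=1$ and the $\epsilon(\bi)$ are orthogonal idempotents, $y_1^{N}=\sum_{\bi\in\mcC}y_1^{N}\epsilon(\bi)=0$ as soon as $N\ge\max_{\bi\in\mcC}\Lambda_{i_1}$, which is finite because $\mcC$ is a single finite $S_n$-orbit. So the base case is essentially the defining relation.

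For the inductive step I would assume $y_1,\dots,y_r$ are nilpotent and deduce the same for $y_{r+1}$. The key tool is the commutation relation (\ref{psiyepsilon(i)}), namely $\psi_r y_s\epsilon(\bi)=[\sigma_r(y_s)\psi_r+\delta_{i_{r+1}}^{i_r}\partial_r(y_s)]\epsilon(\bi)$, specialized to $s=r+1$; since $\sigma_r(y_{r+1})=y_r$ and $\partial_r(y_{r+1})=-1$ (one has $\partial_r(y_{r+1})=\tfrac{y_r-y_{r+1}}{y_r-y_{r+1}}\cdot(\pm1)$ up to sign conventions), this lets one express a power of $y_{r+1}$ acting on $\epsilon(\bi)$ in terms of $\psi_r$, $y_r$ and lower-index data. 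Concretely one shows $y_{r+1}^{M}\epsilon(\bi)$ lies, modulo the ideal generated by the already-nilpotent $y_1,\dots,y_r$, in the left ideal generated by $\psi_r$; combined with $\psi_r^2\epsilon(\bi)$ being either $0$ or a polynomial in $y_r-y_{r+1}$ with no constant term (relation (\ref{psi^2})), and the fact that one can slide $\psi_r$ past $y_{r+1}$ at the cost of introducing $y_r$'s, one concludes that a sufficiently high power of $y_{r+1}$ annihilates every $\epsilon(\bi)$, hence $y_{r+1}$ is nilpotent. The nilpotency of $y_r$ plus relation (\ref{psi^2}) also makes $\psi_r$ itself nilpotent, which feeds the bookkeeping.

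The main obstacle is the bookkeeping of how high a power of $y_{r+1}$ one needs and making sure the argument closes without circularity: each time one moves a $\psi_r$ across a $y_{r+1}$ one picks up $\partial_r$-correction terms involving $y_r$, and one must ensure these stay inside the nilpotent ideal generated by $y_1,\dots,y_r$ rather than reintroducing $y_{r+1}$. This is handled by working modulo that ideal and doing a secondary induction on the degree in $y_{r+1}$, using that $\sigma_r$ fixes $y_j$ for $j\ne r,r+1$ and swaps $y_r\leftrightarrow y_{r+1}$, so that $\psi_r y_{r+1}^k\epsilon(\bi)\equiv y_r^k\psi_r\epsilon(\bi)+(\text{lower-degree in }y_{r+1})$ modulo that ideal; iterating collapses everything onto $y_r$-powers, which vanish. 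A clean way to package this is to invoke the KLR-basis statement of Proposition \ref{prop:KLR basis}: every element of $\mcR$ is a $\Bbbk[y]$-combination of the $\psi_w\epsilon(\bi)$, and the cyclotomic relation together with repeated use of (\ref{psiyepsilon(i)}) and (\ref{psi^2}) propagates nilpotency of $y_1$ through all $y_r$; this is exactly the argument of Brundan--Kleshchev \cite[Lemma 2.1]{Brundan-Kleshchev09}, which applies verbatim here since $\mcR$ and $\mcR(\Lambda)$ are presented by the same generators and relations.
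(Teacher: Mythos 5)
The paper itself offers no proof of this lemma: it is quoted verbatim from \cite[Lemma 2.1]{Brundan-Kleshchev09}, so your closing appeal to that reference is exactly what the authors do, and as a citation your answer is acceptable. The overall strategy you describe (induction on $r$, base case from the cyclotomic relation, propagation via (\ref{psiyepsilon(i)}) and (\ref{psi^2})) is indeed the Brundan--Kleshchev strategy, and your base case is correct. However, the sketch you give of the inductive step is not itself a proof, and it contains two genuine problems.

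First, the side claim that ``the nilpotency of $y_r$ plus relation (\ref{psi^2}) also makes $\psi_r$ itself nilpotent'' is false: whenever some $\bi\in\mcC$ has $i_r-i_{r+1}\neq 0,\pm1$ one has $\psi_r^2\epsilon(\bi)=\epsilon(\bi)$, so $\psi_r^{2k}\epsilon(\bi)=\epsilon(\bi)\neq 0$; and in the adjacent case $\psi_r^2\epsilon(\bi)=\pm(y_r-y_{r+1})\epsilon(\bi)$ is nilpotent only once $y_{r+1}$ is already known to be nilpotent, which is circular. Second, ``working modulo the ideal generated by the already-nilpotent $y_1,\dots,y_r$'' is not a legitimate reduction: in a noncommutative ring the two-sided ideal generated by nilpotent elements need not be nilpotent, so nilpotency modulo that ideal does not yield nilpotency.

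The more serious gap is that the hardest case of the induction, $i_r=i_{r+1}$ (where $\sigma_r(\bi)=\bi$ and $\psi_r^2\epsilon(\bi)=0$), is exactly the one your ``slide $\psi_r$ past $y_{r+1}$ and collapse onto $y_r$-powers'' argument does not close. If you carry out that computation, every identity you produce reduces to a tautology of the form $y_{r+1}^{N}\epsilon(\bi)=y_{r+1}^{N}\epsilon(\bi)$; the correction terms $\partial_r(y_{r+1}^{k})$ reintroduce the full power of $y_{r+1}$ rather than lowering it. The step that actually makes the case work is the following. Choose $N$ with $y_r^N\epsilon(\bj)=0$ for all $\bj$, and set $h:=\sum_{a+b=N-1}y_r^ay_{r+1}^b$, so that $\partial_r(y_r^N)=-h=-\partial_r(y_{r+1}^N)$. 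Applying (\ref{psiyepsilon(i)}) to $0=\psi_r y_r^N\epsilon(\bi)$ and to $\psi_r y_{r+1}^N\epsilon(\bi)$ gives the two identities
\begin{equation*}
h\,\epsilon(\bi)=y_{r+1}^N\psi_r\epsilon(\bi)
\qquad\text{and}\qquad
h\,\epsilon(\bi)=\psi_r y_{r+1}^N\epsilon(\bi),
\end{equation*}
and multiplying one by the other in the order that puts $\psi_r^2$ in the middle yields
\begin{equation*}
\bigl(h\,\epsilon(\bi)\bigr)^2=y_{r+1}^N\psi_r^2\,y_{r+1}^N\epsilon(\bi)=0 .
\end{equation*}
Since $y_{r+1}^{N-1}\epsilon(\bi)=h\,\epsilon(\bi)-y_r g(y_r,y_{r+1})\epsilon(\bi)$ is a difference of two commuting nilpotents inside $\Bbbk[y]\otimes_\Bbbk\mathcal{E}$, it is nilpotent, and hence so is $y_{r+1}\epsilon(\bi)$. (The case $i_r\neq i_{r+1}$ is comparatively easy and your outline works there.) Without this square-zero observation for $h$, your induction does not go through, so either supply it or replace the sketch by the bare citation, as the paper does.
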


\subsection{The semi-rationalizations}  
 We define the {\it semi-rationalization algebra} as the $\Bbbk$-algebra $\tilde{\mathcal{L}}$ generated by $$\{y_1,\cdots,y_n, \psi_1,\cdots,\psi_{n-1}, f^{-1}(y),\epsilon(\bi) \ |\ \bi\in \mcC, f(y)\in \Bbbk[y] \ \mbox{with} \ f(0)\neq 0\}$$
subject to the relations (\ref{ff^{-1}})-(\ref{psi3}) of Proposition \ref{prop:KLR basis}. 

The following result shows that we can construct a complete generating set from the degenerate affine Hecke algebras or from the non-degenerate affine Hecke algebras. This is the key point for proving the BK isomorphism in the sequel.

\begin{theorem}\label{thm:basis semi-ration algebra} $(1)$ Denote by $1\epsilon(\bi)=\epsilon(\bi)$. Then the algebra $\tilde{\mathcal{L}}$ is generated by $$\{X_1,\cdots,X_n, T_1,\cdots,T_{n-1}, f^{-1}(X)\epsilon(\bi) \ |\ \bi\in \mcC, f(X)\in \Bbbk[X] \ \mbox{with} \ f(\bi)\neq 0\}$$
subject to relations (\ref{deaffhecke X})-(\ref{deTTT}), (\ref{epsilon}), (\ref{Xepis}), (\ref{deTepsi}) and for $f\in \Bbbk[X]$ with $f(\bi)\neq 0$
\begin{equation}\label{def(X)f^{-1}(X)}
\epsilon(\bj)\cdot f^{-1}\epsilon(\bi)=\delta_{\bi}^{\bj}\epsilon(\bi)=f^{-1}\epsilon(\bi)\cdot \epsilon(\bj), \ f\cdot f^{-1}\epsilon(\bi)=\epsilon(\bi)=f^{-1}\epsilon(\bi)\cdot f.
\end{equation}

$(2)$ Denote by $1\epsilon(\bi)=\epsilon(\bi)$. Then the algebra $\tilde{\mathcal{L}}$ is generated by $$\{X_1,\cdots,X_n, T_1,\cdots,T_{n-1}, f^{-1}(X)\epsilon(\bi) \ |\ \bi\in \mcC, f(X)\in \Bbbk[X] \ \mbox{with} \ f(q^\bi)\neq 0\}$$
subject to relations (\ref{relation:affhecke X})-(\ref{TTT}), (\ref{epsilon}), (\ref{Xepis}), (\ref{Tepsi}) and for $f\in \Bbbk[X]$ with $f(q^\bi)\neq 0$
\begin{equation}\label{f(X)f^{-1}(X)}
\epsilon(\bj)\cdot f^{-1}\epsilon(\bi)=\delta_{\bi}^{\bj}\epsilon(\bi)=(f^{-1}\epsilon(\bi)\cdot \epsilon(\bj), \ f\cdot f^{-1}\epsilon(\bi)=\epsilon(\bi)=f^{-1}\epsilon(\bi)\cdot f.
\end{equation}
\end{theorem}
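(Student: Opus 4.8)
The plan is to prove both (1) and (2) by the same strategy, so I will focus on (1) and remark that (2) is verbatim with $X_r - i_r$ replaced by $1 - q^{-i_r}X_r$ and the degenerate relations replaced by the non-degenerate ones. Write $\tilde{\mathcal{L}}'$ for the algebra presented by the generators and relations in the statement of (1). First I would construct a homomorphism $\varphi\colon \tilde{\mathcal{L}}' \to \tilde{\mathcal{L}}$. On generators, set $\varphi(\epsilon(\bi)) = \epsilon(\bi)$, $\varphi(X_r) = \sum_{\bi \in \mcC}(y_r + i_r)\epsilon(\bi)$ (the inverse of the change of variables in \eqref{yX}), $\varphi(f^{-1}(X)\epsilon(\bi)) = f(y_1+i_1,\dots,y_n+i_n)^{-1}\epsilon(\bi)$ — which makes sense in $\tilde{\mathcal{L}}$ precisely because $f(\bi)\neq 0$ means $f(y_1+i_1,\dots,y_n+i_n)$ has nonzero constant term, so lies in $\Bbbk[y]$ with nonzero value at $0$ — and $\varphi(T_r) = \psi_r + \sum_{\bi\in\mcC}\tfrac{\delta^{i_{r+1}}_{i_r}}{y_r - y_{r+1}}\epsilon(\bi) - (\text{the correction coming from }\theta_r\text{ vs }\kappa_r)$. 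Concretely, the cleanest route is to recall that in the Lusztig extension $\mathcal{L}$ (which contains $\tilde{\mathcal{L}}$) one has $T_r = \kappa_r - \tfrac{1}{X_r - X_{r+1}}$ by the definition of $\kappa_r$, and $\kappa_r = \theta_r \sum_{\bi}Q_r(\bi)\epsilon(\bi)$ by the definition of $\theta_r$; so I set $\varphi(T_r)$ to be the element of $\tilde{\mathcal{L}}$ obtained by rewriting $\kappa_r\sum_\bi Q_r(\bi)\epsilon(\bi) - \sum_\bi\tfrac{1}{y_r - y_{r+1} + i_r - i_{r+1}}\epsilon(\bi)$ in terms of $\psi_r$, $y_r$, $\epsilon(\bi)$ and inverses of polynomials in $y$ with nonzero constant term. (One must check that all the denominators appearing in $Q_r(\bi)$ and in the formula above are indeed invertible in $\tilde{\mathcal{L}}$: each is of the form $1 + (\text{stuff in }y)$ or $i_r - i_{r+1} + (\text{stuff in }y)$ with $i_r \neq i_{r+1}$, hence has nonzero constant term.) Then I would verify that $\varphi$ respects \eqref{deaffhecke X}--\eqref{deTTT}, \eqref{epsilon}, \eqref{Xepis}, \eqref{deTepsi}, \eqref{def(X)f^{-1}(X)}: the $X$-relations and the $\epsilon$-relations are immediate from \eqref{yepsilon} and the change of variables; \eqref{deTX} and \eqref{deTepsi} follow from \eqref{psiyepsilon(i)}, \eqref{psiepsilon(i)} together with the Leibniz rule and the explicit relation between $\psi_r$, $\theta_r$, $\kappa_r$; the quadratic relation $T_r^2 = 1$ and the braid relations \eqref{deTT}, \eqref{deTTT} follow from \eqref{psi^2}, the commuting relation for $\psi_r\psi_s$, and \eqref{psi3} after reassembling — these are exactly the computations already recorded in the proof of Proposition \ref{prop:KLR basis} run backwards.

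Second, I would construct the inverse $\phi\colon \tilde{\mathcal{L}} \to \tilde{\mathcal{L}}'$. Send $\epsilon(\bi) \mapsto \epsilon(\bi)$, $y_r \mapsto \sum_{\bi\in\mcC}(X_r - i_r)\epsilon(\bi)$, $f^{-1}(y)\epsilon(\bi) \mapsto g^{-1}(X)\epsilon(\bi)$ where $g(X) = f(X_1 - i_1,\dots,X_n - i_n)$ (note $g(\bi) = f(0) \neq 0$, so this is a legal generator of $\tilde{\mathcal{L}}'$), and $\psi_r \mapsto \sum_{\bi\in\mcC}\bigl[T_r + \tfrac{1}{X_r - X_{r+1}}\bigr]Q_r^{-1}(\bi)\epsilon(\bi) - \sum_{\bi}\tfrac{\delta^{i_{r+1}}_{i_r}}{(X_r - i_r) - (X_{r+1} - i_{r+1})}\epsilon(\bi)$, i.e. the image of the defining expression $\psi_r = \kappa_r\sum_\bi Q_r^{-1}(\bi)\epsilon(\bi) - \sum_\bi\tfrac{\delta^{i_{r+1}}_{i_r}}{y_r - y_{r+1}}\epsilon(\bi)$ under $y \rightsquigarrow X - \bi$ and $\kappa_r \rightsquigarrow T_r + \tfrac{1}{X_r - X_{r+1}}$. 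Again one checks this lands in $\tilde{\mathcal{L}}'$ (all the $Q_r^{-1}(\bi)$ become inverses of polynomials in $X$ with nonzero value at $\bi$, which are exactly the adjoined inverses) and that it kills the relations \eqref{ff^{-1}}--\eqref{psi3}: but this is precisely the content of the proof of Proposition \ref{prop:KLR basis}, where those relations were derived from \eqref{fkappa}--\eqref{kappa3}, \eqref{kappa^2}, \eqref{deTepsi}, which are available in $\tilde{\mathcal{L}}'$ via the relations \eqref{deTX}, \eqref{deT2}, \eqref{deTT}, \eqref{deTTT}, \eqref{deTepsi} imposed there. Finally, $\varphi$ and $\phi$ are mutually inverse on generators by construction (the two changes of variables $y \leftrightarrow X - \bi$ are inverse to each other, and $T_r \leftrightarrow \psi_r$ round-trips because $\kappa_r = \theta_r\sum Q_r(\bi)\epsilon(\bi)$ and $\psi_r = \theta_r - \sum\tfrac{\delta}{y_r-y_{r+1}}\epsilon(\bi)$ are mutually expressible), hence they are inverse isomorphisms.

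The main obstacle, and the step I would spend the most care on, is verifying that $\varphi$ is \emph{well-defined} — that the images of $T_r$, $X_r$, $f^{-1}(X)\epsilon(\bi)$ actually satisfy \emph{all} of \eqref{deaffhecke X}--\eqref{deTTT}, \eqref{epsilon}, \eqref{Xepis}, \eqref{deTepsi}, \eqref{def(X)f^{-1}(X)}, and symmetrically that $\phi$ kills \eqref{ff^{-1}}--\eqref{psi3}. The key simplification I would exploit is that I do \emph{not} need to redo these computations from scratch: the Lusztig extension $\mathcal{L}$ of Section 4 contains both $\tilde{\mathcal{L}}$ and (a copy of, via the change of variables) $\tilde{\mathcal{L}}'$ as subalgebras generated by the relevant elements, and inside $\mathcal{L}$ the identities $T_r = \kappa_r - \tfrac{1}{X_r-X_{r+1}}$, $\kappa_r = \theta_r\sum_\bi Q_r(\bi)\epsilon(\bi)$, $\psi_r = \theta_r - \sum_\bi\tfrac{\delta^{i_{r+1}}_{i_r}}{y_r-y_{r+1}}\epsilon(\bi)$ hold literally; all the relations in both presentations are consequences of \eqref{fkappa}--\eqref{kappa3}, \eqref{kappa^2}, \eqref{kappaepsi}, \eqref{deTepsi}, which were established once and for all in Section 4 and Lemma \ref{lem:QsrQsr}. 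So the "verification" collapses to: (i) $T_r, X_r, f^{-1}(X)\epsilon(\bi)$ generate $\tilde{\mathcal{L}}$ (clear, since $\psi_r$ and $f^{-1}(y)\epsilon(\bi)$ are polynomial-in-$X$ combinations of them inside $\mathcal{L}$, using Lemma \ref{lem:K(X)=K(y)}), and (ii) the presentation in the statement is \emph{not too big}, i.e. the listed relations already force everything — which follows because $\tilde{\mathcal{L}}$ has the $\Bbbk(y)\otimes_\Bbbk\mathcal{E}$-basis $\{\psi_w\}$ from Proposition \ref{prop:KLR basis}, equivalently a $\bigl(\bigoplus_\bi \Bbbk[X]_{(\bi)}\bigr)$-basis $\{T_w\}$ after the change of variables, and the listed relations suffice to rewrite an arbitrary word in the new generators into that normal form (exactly as in the last paragraph of the proof of Proposition \ref{prop:KLR basis}). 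I would write this up by first recording the change-of-variables dictionary as a lemma, then defining $\varphi$, then reducing "well-defined" to the Section-4 identities, then defining $\phi$ and checking mutual inverseness on generators, and finally invoking the basis argument to conclude the presentation is complete.
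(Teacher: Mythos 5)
Your proposal is correct and, despite the initial framing via explicit mutually inverse homomorphisms, its operative content (your final paragraph) is exactly the paper's argument: embed both presented algebras into the Lusztig extension $\mathcal{L}$ via their normal forms $\sum_w\psi_w\,\mathcal{P}(y,\mathcal{E})$ and $\sum_w T_w\,\mathcal{P}(X,\mathcal{E})$, then identify the two images using $\mathcal{P}(X,\mathcal{E})=\mathcal{P}(y,\mathcal{E})$ and the dictionary $\psi_r=\bigl(T_r+\ast\bigr)Q_r^{-1}(\bi)\epsilon(\bi)$ with $Q_r^{\pm1}(\bi)\in\mathcal{P}(y,\mathcal{E})$. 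The only point to watch is the one you already flag: when $i_r=i_{r+1}$ the individual terms $\tfrac{1}{X_r-X_{r+1}}\epsilon(\bi)$ are not legal generators and one must combine them into $(T_r+1)Q_r^{-1}(\bi)\epsilon(\bi)$, as the paper does.
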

\begin{proof}
	(1) There is an obvious homomorphism $\alpha\colon \tilde{\mathcal{L}}\to \mathcal{L}$ by sending generators to the same named generators. This homomorphism is injective since using relations (\ref{ff^{-1}})-(\ref{psi3}), every element in $\tilde{\mathcal{L}}$ can be written as  
	$$\sum_{w\in S_n,\bi\in \mcC}\psi_w\tfrac{f_{w,\bi}(y)}{g_{w,\bi}(y)}\epsilon(\bi)$$ with $f_{w,\bi}(y), g_{w,\bi}(y)$ in $\Bbbk[y]$ and $g_{w,\bi}(0)\neq 0$, and $\{\psi_w\ | \ w\in S_n\}$ is a $\Bbbk(y)\otimes_\Bbbk\mathcal{E}$-basis of $\mathcal{L}$. Thus $\mathrm{Im}\alpha=\oplus_{w\in S_n}\psi_w\mathcal{P}(y,\mathcal{E})$, where $\mathcal{P}(y,\mathcal{E})$ is the commutative algebra $\{fg^{-1}\ | \ f,g\in\Bbbk[y], g(0)\neq0\}\otimes_\Bbbk\mathcal{E}$.
	
	Assume $\mathcal{A}$ is the $\Bbbk$-algebra generated by $$\{X_1,\cdots,X_n, T_1,\cdots,T_{n-1}, f^{-1}(X)\epsilon(\bi) \ |\ \bi\in \mcC, f(X)\in \Bbbk[X] \ \mbox{with} \ f(\bi)\neq 0\}$$
 subject to relations (\ref{deaffhecke X})-(\ref{deTTT}), (\ref{epsilon}), (\ref{Xepis}), (\ref{deTepsi}) and (\ref{def(X)f^{-1}(X)}). Then, using these relations, every element in $\mathcal{A}$ can be written as $$\sum_{w\in S_n,\bi\in \mcC}T_wf_{w,\bi(X)}\cdot g_{w,\bi}^{-1}(X)\epsilon(\bi)$$ with $f_{w,\bi}(X), g_{w,\bi}(X)$ in $\Bbbk[X]$ and $g_{w,\bi}(\bi)\neq 0$. Thus there also has an injective homomorphism $\alpha'\colon \mathcal{A}\to \mathcal{L}$ by sending generators to the same named generators. Therefore $\mathrm{Im}\alpha'=\oplus_{w\in S_n}T_w\mathcal{P}(X,\mathcal{E})$, where $\mathcal{P}(X,\mathcal{E})$ is the commutative algebra $\{f\cdot g^{-1}\epsilon(\bi) \ | \  \bi\in \mcC,f,g\in \Bbbk[X], g(\bi)\neq 0\}$.
 
 We claim that $\mathrm{Im}\alpha=\mathrm{Im}\alpha'$. In fact, by Lemma \ref{lem:K(X)=K(y)}, $\mathcal{P}(X,\mathcal{E})=\mathcal{P}(y,\mathcal{E})$ in $\mathcal{L}$. Notice that
 \begin{equation*}\psi_r=\sum\limits_{\begin{subarray}{1}\bi\in\mcC \\
 	i_r\neq i_{r+1}\end{subarray}}(T_r+\tfrac{1}{y_r-y_{r+1}+i_r-i_{r+1}})Q_r^{-1}(\bi)\epsilon(\bi)+\sum\limits_{\begin{subarray}{1}\bi\in\mcC \\
 	i_r=i_{r+1}\end{subarray}}(T_r+1)Q^{-1}_r(\bi)\epsilon(\bi)\end{equation*}
and $Q_r(\bi), Q_r^{-1}(\bi)\in \mathcal{P}(y,\mathcal{E})$ in $\mathcal{L}$, we get $\mathrm{Im}\alpha=\mathrm{Im}\alpha'$, and then $\mathcal{A}=\tilde{\mathcal{L}}$.
 
 (2) Imitate the proof of the statement $(1)$.
\end{proof}

\begin{lemma} \label{lem:ccover} $\tilde{\mathcal{L}}/\langle y_1^{\Lambda_{i_1}}\epsilon(\bi)\ | \ \bi\in \mcC\rangle =\begin{cases}\tilde{\mathcal{L}}/\langle \prod_{i\in I}(X_1-i)^{\Lambda_i}\rangle &\text{for $\mcH$},\\
	\tilde{\mathcal{L}}/\langle \prod_{i\in I}(X_1-q^i)^{\Lambda_i}\rangle	&\text{for $\mcH_q$}.
	\end{cases}$
\end{lemma}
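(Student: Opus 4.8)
The plan is to prove the two claimed equalities of quotient algebras by showing that the relevant two-sided ideals coincide. Consider the degenerate case first. By Theorem~\ref{thm:basis semi-ration algebra}(1), the algebra $\tilde{\mathcal{L}}$ is generated by the $X_r$, $T_r$ and the localized idempotents $f^{-1}(X)\epsilon(\bi)$, and in particular the $\epsilon(\bi)$ for $\bi\in\mcC$ form a complete set of orthogonal idempotents summing to $1$ (the sum $\sum_{\bi\in\mcC}\epsilon(\bi)$ is the identity, since $1\epsilon(\bi)=\epsilon(\bi)$ and the $\epsilon(\bi)$ exhaust the orbit $\mcC$). So it suffices to compare the ideals after multiplying by each $\epsilon(\bi)$, i.e. to show that
\[
\sum_{\bi\in\mcC}\tilde{\mathcal{L}}\,y_1^{\Lambda_{i_1}}\epsilon(\bi)\,\tilde{\mathcal{L}}
\;=\;
\tilde{\mathcal{L}}\Big(\prod_{i\in I}(X_1-i)^{\Lambda_i}\Big)\tilde{\mathcal{L}}.
\]

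The key computational input is the identity $y_1\epsilon(\bi)=(X_1-i_1)\epsilon(\bi)$ from~(\ref{yX}), which gives $y_1^{\Lambda_{i_1}}\epsilon(\bi)=(X_1-i_1)^{\Lambda_{i_1}}\epsilon(\bi)$ for each $\bi\in\mcC$. For the inclusion $\supseteq$, I would observe that for any $\bi\in\mcC$,
\[
\Big(\prod_{i\in I}(X_1-i)^{\Lambda_i}\Big)\epsilon(\bi)
=\Big(\prod_{i\neq i_1}(X_1-i)^{\Lambda_i}\Big)(X_1-i_1)^{\Lambda_{i_1}}\epsilon(\bi)
=\Big(\prod_{i\neq i_1}(X_1-i)^{\Lambda_i}\Big)\,y_1^{\Lambda_{i_1}}\epsilon(\bi),
\]
so $\prod_{i\in I}(X_1-i)^{\Lambda_i}=\sum_{\bi\in\mcC}\prod_{i\in I}(X_1-i)^{\Lambda_i}\epsilon(\bi)$ lies in the ideal generated by the $y_1^{\Lambda_{i_1}}\epsilon(\bi)$. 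For the reverse inclusion $\subseteq$, I would work modulo $J:=\tilde{\mathcal{L}}\big(\prod_{i\in I}(X_1-i)^{\Lambda_i}\big)\tilde{\mathcal{L}}$ and show $y_1^{\Lambda_{i_1}}\epsilon(\bi)\in J$ for every $\bi\in\mcC$. The point is that in $\tilde{\mathcal{L}}/J$ the element $X_1-i_1$ acts locally nilpotently on the summand $\epsilon(\bi)\tilde{\mathcal{L}}/J$-ish pieces: more precisely, for $i\neq i_1$ the scalar-shifted element $X_1-i=(X_1-i_1)+(i_1-i)$ becomes invertible after multiplying by $\epsilon(\bi)$ once $X_1-i_1$ is nilpotent, because $(X_1-i_1)\epsilon(\bi)=y_1\epsilon(\bi)$ and, more to the point, $f(X)\epsilon(\bi)$ is invertible in $\tilde{\mathcal{L}}$ whenever $f(\bi)\neq0$ by~(\ref{def(X)f^{-1}(X)}). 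Thus from $\prod_{i\in I}(X_1-i)^{\Lambda_i}\epsilon(\bi)\in J$ and the invertibility of $\prod_{i\neq i_1}(X_1-i)^{\Lambda_i}\epsilon(\bi)$ in $\tilde{\mathcal{L}}$, we can cancel that factor to conclude $(X_1-i_1)^{\Lambda_{i_1}}\epsilon(\bi)=y_1^{\Lambda_{i_1}}\epsilon(\bi)\in J$, completing $\subseteq$.

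The non-degenerate case is entirely parallel, using instead the identity $y_1\epsilon(\bi)=(1-q^{-i_1}X_1)\epsilon(\bi)$ from~(\ref{yX}): up to the unit $-q^{-i_1}$ one has $y_1\epsilon(\bi)$ differing from $(X_1-q^{i_1})\epsilon(\bi)$ by multiplication by the unit $q^{-i_1}$, so $y_1^{\Lambda_{i_1}}\epsilon(\bi)$ and $(X_1-q^{i_1})^{\Lambda_{i_1}}\epsilon(\bi)$ generate the same ideal contribution; one then repeats the factorization and cancellation argument with $\prod_{i\in I}(X_1-q^i)^{\Lambda_i}$, using Theorem~\ref{thm:basis semi-ration algebra}(2) and~(\ref{f(X)f^{-1}(X)}) for the invertibility of $\prod_{i\neq i_1}(X_1-q^i)^{\Lambda_i}\epsilon(\bi)$ (note $q^i\neq q^{i_1}$ exactly when $i\neq i_1$ in $I=\mathbb{Z}/e\mathbb{Z}$, by the definition of $e$). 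The main obstacle I anticipate is the bookkeeping in the cancellation step: one must make sure that the localized idempotents $f^{-1}(X)\epsilon(\bi)$ needed to invert $\prod_{i\neq i_1}(X_1-i)^{\Lambda_i}\epsilon(\bi)$ genuinely lie in $\tilde{\mathcal{L}}$ (they do, since the required $f$ satisfies $f(\bi)\neq 0$) and that multiplying the relation by such a one-sided-localized element on the correct side within a two-sided ideal is legitimate — which it is, since $f^{-1}(X)\epsilon(\bi)\cdot\epsilon(\bi)\tilde{\mathcal{L}}\big(\prod(X_1-i)^{\Lambda_i}\big)\tilde{\mathcal{L}}\subseteq J$. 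Once this is handled carefully for one $\bi$, the argument is uniform over all $\bi\in\mcC$ and over both the degenerate and non-degenerate settings.
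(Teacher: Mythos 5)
Your proposal is correct and follows essentially the same route as the paper: both inclusions of ideals are established via the identity $y_1^{\Lambda_{i_1}}\epsilon(\bi)=(X_1-i_1)^{\Lambda_{i_1}}\epsilon(\bi)$ (resp.\ its $q$-analogue up to the unit $-q^{-i_1}$), the factorization of $\prod_{i\in I}(X_1-i)^{\Lambda_i}\epsilon(\bi)$, and the invertibility of $\prod_{i\neq i_1}(X_1-i)^{\Lambda_i}\epsilon(\bi)$ in $\tilde{\mathcal{L}}$ guaranteed by Theorem~\ref{thm:basis semi-ration algebra}. The brief aside about local nilpotency is unnecessary (the invertibility holds in $\tilde{\mathcal{L}}$ itself, as you then correctly note), but it does not affect the argument.
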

\begin{proof} For the degenerate case, by the relation of $y_1$ and $X_1$, we have \begin{align*}\prod_{i\in I}(X_1-i)^{\Lambda_i}&=\sum_{\bj\in \mcC}\prod_{i\in I}(y_1+j_1-i)^{\Lambda_i}\epsilon(\bj)\\
	&=\sum_{\bj\in \mcC}\prod_{i\in I,i\neq j_1}(y_1+j_1-i)^{\Lambda_i}y_1^{\Lambda_{j_1}}\epsilon(\bj)
	\end{align*} is in $\langle y_1^{\Lambda_{j_1}}\epsilon(\bj)\ | \ \bj\in \mcC\rangle$. By Theorem (\ref{thm:basis semi-ration algebra}) (1), $\prod_{i\in I, i\neq j_1}[(X_1-i)^{\Lambda_i}]^{-1}\epsilon(\bj)$ is in $\tilde{\mathcal{L}}$, thus \begin{align*}y_1^{\Lambda_{j_1}}\epsilon(\bj)&=(X_1-j_1)^{\Lambda_{j_1}}\epsilon(\bj)\\
	&=\prod_{i\in I}(X_1-i)^{\Lambda_i}\prod_{i\in I, i\neq j_1}[(X_1-i)^{\Lambda_i}]^{-1}\epsilon(\bj)
	\end{align*}
	is in $\langle \prod_{i\in I}(X_1-i)^{\Lambda_i}\rangle$. Hence $\langle y_1^{\Lambda_{i_1}}\epsilon(\bi)\ | \ \bi\in \mcC\rangle =\langle \prod_{i\in I}(X_1-i)^{\Lambda_i}\rangle$ in $\tilde{\mathcal{L}}.$
	
	For the non-degenerate case, by the relation of $y_1$ and $X_1$, we get that \begin{align*}\prod_{i\in I}(X_1-q^i)^{\Lambda_i}&=\sum_{\bj\in \mcC}\prod_{i\in I}(q^{j_1}-q^{i}-q^{j_1}y_1)^{\Lambda_i}\epsilon(\bj)\\
	&=\sum_{\bj\in \mcC}\prod_{i\in I,i\neq j_1}(-q^{j_1})^{\Lambda_{j_1}}(q^{j_1}-q^{i}-q^{j_1}y_1)^{\Lambda_i}y_1^{\Lambda_{j_1}}\epsilon(\bj)
	\end{align*} is in $\langle y_1^{\Lambda_{j_1}}\epsilon(\bj)\ | \ \bj\in \mcC\rangle$. By Theorem (\ref{thm:basis semi-ration algebra}) (1), $\prod_{i\in I, i\neq j_1}[(X_1-q^i)^{\Lambda_i}]^{-1}\epsilon(\bj)$ is in $\tilde{\mathcal{L}}$, thus \begin{align*}y_1^{\Lambda_{j_1}}\epsilon(\bj)&=-q^{-j_{1}}(X_1-q^{j_1})^{\Lambda_{j_1}}\epsilon(\bj)\\
	&=-q^{-j_{1}}\prod_{i\in I}(X_1-q^i)^{\Lambda_i}\prod_{i\in I, i\neq j_1}[(X_1-q^i)^{\Lambda_i}]^{-1}\epsilon(\bj)
	\end{align*}
	is in $\langle \prod_{i\in I}(X_1-q^i)^{\Lambda_i}\rangle$. So $\langle y_1^{\Lambda_{i_1}}\epsilon(\bi)\ | \ \bi\in \mcC\rangle =\langle \prod_{i\in I}(X_1-q^i)^{\Lambda_i}\rangle$ in $\tilde{\mathcal{L}}$, and then we are done.
\end{proof}

\begin{remark}  Denote by $$\tilde{\mathcal{L}}(\Lambda):=\tilde{\mathcal{L}}/\langle y_1^{\Lambda_{i_1}}\epsilon(\bi) | \bi\in \mcC \rangle.$$ 
	Similar to the proof of Lemma \ref{lem:nilp of y}, the elements $y_r$ are nilpotent in $\tilde{\mathcal{L}}(\Lambda)$. Therefore the elements $\prod_{i\in I}(X_r-i)$ (resp., $\prod_{i\in I}(X_r-q^i)$) are also nilpotent in $\tilde{\mathcal{L}}(\Lambda)$. 
\end{remark}

 The following result is important for us.

\begin{lemma} \label{lem:coverCRKLRA} We have $\Bbbk$-algebra isomorphsim $\mcR(\Lambda)\cong \tilde{\mathcal{L}}(\Lambda)$.
\end{lemma}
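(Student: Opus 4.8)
The plan is to exhibit the isomorphism $\mcR(\Lambda)\cong\tilde{\mathcal{L}}(\Lambda)$ as a quotient of the obvious inclusion $\mcR\hookrightarrow\tilde{\mathcal{L}}$ coming from Proposition \ref{prop:KLR basis}. Recall that both $\mcR$ and $\tilde{\mathcal{L}}$ are presented by the same generators $y_r,\psi_r,\epsilon(\bi)$ and the same relations \eqref{ff^{-1}}--\eqref{psi3}, the only difference being that $\tilde{\mathcal{L}}$ in addition adjoins the inverses $f^{-1}(y)$ for $f(0)\neq 0$. So there is a canonical algebra homomorphism $\iota\colon\mcR\to\tilde{\mathcal{L}}$ sending generators to the same-named generators, and by the basis statement in Proposition \ref{prop:KLR basis} (the $\psi_w$ form a $\Bbbk(y)\otimes_\Bbbk\mathcal{E}$-basis of $\mathcal{L}$, hence a $\mathcal{P}(y,\mathcal{E})$-basis of $\tilde{\mathcal{L}}$, while the $\psi_w$ with polynomial coefficients span $\mcR$), this $\iota$ is injective. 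First I would pass $\iota$ to the cyclotomic quotients: since $\iota$ carries the ideal $\langle y_1^{\Lambda_{i_1}}\epsilon(\bi)\mid\bi\in\mcC\rangle$ of $\mcR$ into the corresponding ideal of $\tilde{\mathcal{L}}$, it induces $\bar\iota\colon\mcR(\Lambda)\to\tilde{\mathcal{L}}(\Lambda)$.

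The substance of the proof is that $\bar\iota$ is bijective. For surjectivity I would use the Remark following Lemma \ref{lem:ccover}: in $\tilde{\mathcal{L}}(\Lambda)$ every $y_r$ is nilpotent, say $y_r^{N}=0$ for some $N$ (uniformly in $r$). Then for any $f(y)\in\Bbbk[y]$ with $f(0)\neq 0$ the element $f(y)$ is a unit in $\tilde{\mathcal{L}}(\Lambda)$ whose inverse is a \emph{polynomial} in the $y_r$: writing $f=f(0)(1-z)$ with $z$ in the ideal generated by the $y_r$, we have $z^{M}=0$ for $M$ large (the $y_r$ commute, so a single bound works), hence $f(y)^{-1}=f(0)^{-1}\sum_{k=0}^{M-1}z^{k}\in\Bbbk[y]$. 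Consequently the image of $\bar\iota$, which is the subalgebra generated by $y_r,\psi_r,\epsilon(\bi)$, already contains all the adjoined inverses $f^{-1}(y)$, so $\bar\iota$ is onto.

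For injectivity I would argue that the defining ideal of $\tilde{\mathcal{L}}(\Lambda)$ meets $\iota(\mcR)$ in exactly $\iota$ of the defining ideal of $\mcR(\Lambda)$. Concretely, use the $\mathcal{P}(y,\mathcal{E})$-basis $\{\psi_w\}_{w\in S_n}$ of $\tilde{\mathcal{L}}$ together with the fact, just established, that modulo $\langle y_1^{\Lambda_{i_1}}\epsilon(\bi)\rangle$ every denominator becomes invertible via a polynomial; hence reduction mod the cyclotomic ideal identifies $\tilde{\mathcal{L}}(\Lambda)$ with the quotient of the \emph{polynomial} span $\bigoplus_{w}\psi_w\,\Bbbk[y]\otimes_\Bbbk\mathcal{E}=\mcR$ by the same relations $y_1^{\Lambda_{i_1}}\epsilon(\bi)$, which is precisely $\mcR(\Lambda)$. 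In other words, $\ker\bar\iota=0$ because a lift to $\mcR$ of any element of $\ker\bar\iota$ lies, after clearing the (now polynomial) denominators, in $\langle y_1^{\Lambda_{i_1}}\epsilon(\bi)\rangle\cap\mcR=\langle y_1^{\Lambda_{i_1}}\epsilon(\bi)\rangle_{\mcR}$.

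The main obstacle is the bookkeeping in the injectivity step: one must be careful that the ideal $\langle y_1^{\Lambda_{i_1}}\epsilon(\bi)\mid\bi\in\mcC\rangle$ computed inside $\tilde{\mathcal{L}}$ does not acquire ``new'' elements upon intersecting with $\mcR$ — i.e., that multiplying a relation by a rational function $f^{-1}(y)$ and landing back in $\mcR$ forces $f^{-1}(y)$ itself to be expressible polynomially on the relevant summands, which again hinges on the nilpotency of the $y_r$ modulo the ideal. Once the nilpotency statement from the Remark is in hand, this is the crux and everything else is formal.
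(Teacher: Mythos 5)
Your surjectivity argument is sound and is essentially the paper's: the nilpotency of the $y_r$ in $\tilde{\mathcal{L}}(\Lambda)$ (the Remark after Lemma \ref{lem:ccover}) forces every adjoined inverse $f^{-1}(y)$ to coincide with a polynomial in the quotient, so the image of $\mcR$ is all of $\tilde{\mathcal{L}}(\Lambda)$. The gap is in the injectivity half. What has to be shown is $\langle y_1^{\Lambda_{i_1}}\epsilon(\bi)\rangle_{\tilde{\mathcal{L}}}\cap\mcR=\langle y_1^{\Lambda_{i_1}}\epsilon(\bi)\rangle_{\mcR}$, and your justification --- that ``reduction mod the cyclotomic ideal identifies $\tilde{\mathcal{L}}(\Lambda)$ with the quotient of $\bigoplus_w\psi_w\,\Bbbk[y]\otimes_\Bbbk\mathcal{E}$ by the same relations'' --- is precisely the statement being proved, not an argument for it. Replacing a denominator $f^{-1}$ by its polynomial representative changes an element only modulo the ideal of $\tilde{\mathcal{L}}$, so ``clearing denominators'' this way never leaves the circle. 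Moreover the nilpotency you invoke (in $\tilde{\mathcal{L}}(\Lambda)$, from the Remark) is the wrong one for this step. If you write a typical element of the big ideal as $\sum_k f_k^{-1}a'_k\,y_1^{\Lambda_{\cdot}}\epsilon(\bi)\,b'_k$ with $a'_k,b'_k\in\mcR$ and $f_k$ a \emph{symmetric} (hence central) denominator --- note that a non-symmetric $f^{-1}$ does not commute with the $\psi_r$ and cannot be pulled to one side, so you need the symmetrization $f^{-1}=\bigl(\prod_{w\in S_n}w(f)\bigr)^{-1}\prod_{w\neq 1}w(f)$, i.e.\ the decomposition $\tilde{\mathcal{L}}=\mcR\otimes_{\Bbbk[y]^{S_n}}\tilde{F}$ --- you only conclude that $Fx\in\langle y_1^{\Lambda_{i_1}}\epsilon(\bi)\rangle_{\mcR}$ for $F=\prod_kf_k$. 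To divide by $F$ you need $F$ to be a unit in $\mcR(\Lambda)$, i.e.\ the nilpotency of the $y_r$ in $\mcR(\Lambda)$, which is Lemma \ref{lem:nilp of y} (the genuinely nontrivial Brundan--Kleshchev input) and which your proof never uses; nilpotency in $\tilde{\mathcal{L}}(\Lambda)$ does not give nilpotency in $\mcR(\Lambda)$ unless $\bar\iota$ is already known to be injective.

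For comparison, the paper sidesteps the ideal-intersection bookkeeping entirely by constructing an explicit two-sided inverse: since Lemma \ref{lem:nilp of y} makes every $f\in\Bbbk[y]^{S_n}$ with $f(0)\neq 0$ a unit in $\mcR(\Lambda)$, the universal property of the localization $\tilde{F}$ extends $\mcR\twoheadrightarrow\mcR(\Lambda)$ to a map $\tilde{\mathcal{L}}=\mcR\otimes_{\Bbbk[y]^{S_n}}\tilde{F}\to\mcR(\Lambda)$, which kills the cyclotomic ideal and descends to $\tilde{\mathcal{L}}(\Lambda)\to\mcR(\Lambda)$. Your route can be repaired along the lines sketched above, but as written the injectivity step is not established.
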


\begin{proof}  First note that if $f(y)\in \Bbbk[y]$ with $f(0)\neq0$,  the polynomial $f(y)-f(0)$ is nilpotent in $\tilde{\mathcal{L}}(\Lambda)$ by the nilpotency of $y_r$'s. So there exists some $g(y)\in \Bbbk[y]$ and $m\in \mathbb{N}$ such that $g(y)^m=0$ and $f^{-1}(y)=f(0)^{-1}\sum_{l=0}^m g(y)^l$ in $\mcR(\Lambda)$. Thus the  homomorphism $\mcR\hookrightarrow \tilde{\mathcal{L}}\twoheadrightarrow \tilde{\mathcal{L}}(\Lambda)$ is surjective and induces a surjective homomorphism $\pi\colon \mcR(\Lambda)\to \tilde{\mathcal{L}}(\Lambda)$. Let $\tilde{F}$ be the localization of the commutative ring $\Bbbk[y]^{S_n}$ of $S_n$-invariants in $\Bbbk[y]$ with respect to $\{f\in \Bbbk[y]^{S_n} \  | \ f(0)\neq 0\}$. Similar to the proof of (\ref{K(X)}), we have $\tilde{\mathcal{L}}=\mathcal{R}\otimes_{\Bbbk[y]^{S_n}}\tilde{F}$ by Theorem \ref{thm:basis semi-ration algebra}. Since the elements $y_r$ are nilpotent in
$\mathcal{R}(\Lambda)$, similar to the proof above, we know that if $f(y)\in K[y]^{S_n}$ with $f(0)\neq 0$, then it is a unit in $\mathcal{R}(\Lambda)$. Thus the homomorphism $K[y]^{S_n}\hookrightarrow \mathcal{R}\stackrel{\pi_1}\twoheadrightarrow \tilde{\mathcal{R}}(\Lambda)$ induces a morphism $\pi_2\colon \tilde{F}\to \mathcal{R}(\Lambda)$. Therefore we have an induced algebra homomorphism $\pi_1\otimes \pi_2\colon \tilde{\mathcal{L}}\to \mathcal{R}(\Lambda)$. The homomorphism $\pi_1\otimes\pi_2$ induces an algebra homomorphism $\pi'\colon \tilde{\mathcal{L}}(\Lambda)\to \mathcal{R}(\Lambda)$. It is easy to check that $\pi$ and $\pi'$ are two-sided inverses. So $\mcR(\Lambda)\cong \tilde{\mathcal{L}}(\Lambda)$.
\end{proof}

\section{The Brundan-Kleshchev isomorphism}

\subsection{Cyclotomic degenerate affine Hecke algebras}

Recall that the {\itshape cyclotomic degenerate affine Hecke algebra} is defined as
$$\mcH(\Lambda):=\mcH/\langle\prod_{i\in I}(X_1-i)^{\Lambda_i}\rangle.$$
By \cite[Subsection 3.1]{Brundan-Kleshchev09}, there is a system $\{e(\bi) \ | \ \bi\in \mcC\}$ of mutually orthogonal idempotents in $\mcH(\Lambda)$ such that $1=\sum_{\bi\in I^n}e(\bi)$ and $$e(\bi)\mcH(\Lambda)=\{h\in \mcH(\Lambda) \ | \ (X_r-i_r)^mh=0 \ \mbox{for all} \ 1\leq r\leq n \ \mbox{and} \ m\gg0\}.$$
It is easy to check that $X_re(\bi)=e(\bi)X_r$ for all $1\leq r\leq n$ and $\bi\in \mcC$, and the element $f(X)e(\bi)$ with $f(X)\in \Bbbk[X]$ is a unit in $e(\bi)\mcH(\Lambda)$ if and only if $f(\bi)\neq 0$. In this case, we write $f(X)^{-1}e(\bi)$ for the inverse. 
\begin{lemma} \label{lem:degenerate} For $1\leq r<n$ and $\bi\in I^{n}$, we have that
	\begin{equation} \label{deTre-eTr} T_re(\bi)=\begin{cases}
	e(\bi)T_r & \text{if $i_{r+1}=i_r$},\\
	e(\sigma_r(\bi))T_r+
	(X_r-X_{r+1})^{-1}(e(\sigma_r(\bi))-e(\bi))  & \text{if $i_{r+1}\neq i_r$}.
	\end{cases}
	\end{equation}
\end{lemma}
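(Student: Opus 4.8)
The plan is to base everything on the identity $T_rf=\sigma_r(f)T_r+\partial_r(f)$ for $f\in\Bbbk[X]$, which holds in $\mcH$ — it is \eqref{deTX} on the generators $X_s$ and propagates to all polynomials by the Leibniz rule for $\partial_r$ and induction on degree — and hence in $\mcH(\Lambda)$. To apply it to $e(\bi)$ I first want to exhibit $e(\bi)$ as a genuine polynomial in $X_1,\dots,X_n$. Since $\mcH(\Lambda)$ is finite dimensional and every $X_s$ has all its generalized eigenvalues in a fixed finite subset $S\subseteq I$ (this is the content of the system $\{e(\bj)\}$ together with \cite[Lemma 2.1]{Brundan-Kleshchev09}), we may fix, for each $a\in S$, a polynomial $p_a(t)$ with $p_a(t)\equiv\delta_{a,c}\pmod{(t-c)^{M}}$ for all $c\in S$, where $M$ is chosen so that $\prod_{c\in S}(X_s-c)^{M}=0$ in $\mcH(\Lambda)$ for every $s$. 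Then $p_a(X_s)$ is the spectral idempotent of $X_s$ onto its generalized $a$-eigenspace, for \emph{every} $s$; consequently $p_a(X_s)p_b(X_s)=\delta_{a,b}p_a(X_s)$, $\sum_{a\in S}p_a(X_s)=1$, all $p_a(X_s)$ commute with one another and with the $X_t$, and $e(\bi)=\prod_{s=1}^{n}p_{i_s}(X_s)$. Applying the ring automorphism $\sigma_r$ of $\Bbbk[X]$ to this polynomial simply swaps the roles of $X_r$ and $X_{r+1}$, so $\sigma_r(e(\bi))=\prod_sp_{(\sigma_r\bi)_s}(X_s)=e(\sigma_r(\bi))$; and $T_r$ commutes with $p_{i_s}(X_s)$ for $s\neq r,r+1$ because $T_rX_s=X_sT_r$ there by \eqref{deTX}.

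Writing $e':=\prod_{s\neq r,r+1}p_{i_s}(X_s)$ (so $\sigma_r(e')=e'$, $\partial_r(e')=0$, $T_re'=e'T_r$) and $g:=p_{i_r}(X_r)p_{i_{r+1}}(X_{r+1})$, the basic identity gives
\[
T_re(\bi)=e'\bigl(\sigma_r(g)T_r+\partial_r(g)\bigr)=e(\sigma_r(\bi))T_r+h,\qquad h:=e'\,\partial_r(g),
\]
and multiplying $(X_r-X_{r+1})\partial_r(g)=\sigma_r(g)-g$ by $e'$ shows $(X_r-X_{r+1})h=e(\sigma_r(\bi))-e(\bi)$. If $i_r=i_{r+1}$ then $g$ is $\sigma_r$-invariant, so $\partial_r(g)=0$, whence $h=0$ and $\sigma_r(\bi)=\bi$: this is the first case of the Lemma.

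Assume now $i_r\neq i_{r+1}$ and set $E:=e(\bi)+e(\sigma_r(\bi))$. The crux is that $h\in E\mcH(\Lambda)E$. To see this I would show $T_re(\bi)$ has left support only on $\bi$ and $\sigma_r(\bi)$: first, $(X_s-i_s)^{N}T_re(\bi)=T_r(X_s-i_s)^{N}e(\bi)=0$ for $s\neq r,r+1$ and $N\gg0$, so $e(\bj)T_re(\bi)=0$ unless $j_s=i_s$ for every $s\neq r,r+1$; second, the image of $\Bbbk[X]^{S_n}$ in $\mcH(\Lambda)$, which is central, acts on $e(\bj)\mcH(\Lambda)e(\bi)$ through two generalized characters that disagree unless $\bj$ and $\bi$ have the same symmetric functions of their entries, i.e.\ are $S_n$-conjugate, so $e(\bj)\mcH(\Lambda)e(\bi)=0$ in all other cases. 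Combining the two conditions forces $(j_r,j_{r+1})\in\{(i_r,i_{r+1}),(i_{r+1},i_r)\}$, so $T_re(\bi)=ET_re(\bi)$ and $Eh=h$; as $h$ is a polynomial in the $X_t$ it commutes with $E$, so $h\in E\mcH(\Lambda)E$. Finally $X_r-X_{r+1}$ is a unit in $E\mcH(\Lambda)E$, its generalized eigenvalues there being $\pm(i_r-i_{r+1})\neq0$, and applying the inverse to $(X_r-X_{r+1})h=e(\sigma_r(\bi))-e(\bi)$ yields $h=(X_r-X_{r+1})^{-1}\bigl(e(\sigma_r(\bi))-e(\bi)\bigr)$, which is the second case.

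The step I expect to be the main obstacle is legitimising the computation $T_re(\bi)=\sigma_r(e(\bi))T_r+\partial_r(e(\bi))$ with the correct reading $\sigma_r(e(\bi))=e(\sigma_r(\bi))$, which is exactly why the uniform spectral presentation $e(\bi)=\prod_sp_{i_s}(X_s)$ is set up, together with the support-and-block bookkeeping that confines the remainder term $h$ to the two-block corner $E\mcH(\Lambda)E$; once those are in place the rest is routine.
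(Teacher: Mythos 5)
Your proof is correct, but it takes a genuinely different route from the paper's. The paper never writes $e(\bi)$ as a polynomial in the $X_s$: for $i_r=i_{r+1}$ it checks directly from $fT_r=T_r\sigma_r(f)+\parti_r(f)$ that $T_re(\bi)$ lies in the generalized eigenspace $e(\bi)\mcH(\Lambda)$, and for $i_r\neq i_{r+1}$ it uses the intertwining element $T_r(X_r-X_{r+1})+1$, which intertwines $f$ with $\sigma_r(f)$ and therefore carries $e(\bi)\mcH(\Lambda)$ into $e(\sigma_r(\bi))\mcH(\Lambda)$; right-dividing by the unit $(X_r-X_{r+1})e(\bi)$ then yields the formula. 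You instead realize $e(\bi)$ as a product of spectral idempotents $p_{i_s}(X_s)$, apply the twisted commutation rule to that polynomial to obtain the uniform identity $T_re(\bi)=e(\sigma_r(\bi))T_r+\parti_r(e(\bi))$, and then pin down the remainder by a block-support argument (commutation of $T_r$ with $X_s$ for positions $s\neq r,r+1$, and central characters of $Z=\Bbbk[X]^{S_n}$ for positions $r,r+1$) before inverting $X_r-X_{r+1}$ on the corner $E\mcH(\Lambda)E$ with $E=e(\bi)+e(\sigma_r(\bi))$; this last confinement step is genuinely needed, since $X_r-X_{r+1}$ is not invertible on blocks $e(\bj)$ with $j_r=j_{r+1}$, and you supply it. Your version handles both cases of the lemma at once, and the support statement $T_re(\bi)=ET_re(\bi)$ is a reusable byproduct; the cost is two inputs the paper's argument avoids, namely the polynomial realization of the $e(\bi)$ (so that $\sigma_r(e(\bi))=e(\sigma_r(\bi))$ makes literal sense) and Lusztig's identification of the center of $\mcH$ with the symmetric polynomials. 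Both arguments are sound.
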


\begin{proof}   For any $1\leq s\leq n$, the element $$(\sigma_r(X_s)-\sigma(\bi)_s)e(\bi)=\begin{cases}
	(X_s-i_s)e(\bi)&\text{if $s\neq r,r+1$,}\\
	(X_{r+1}-i_{r+1})e(\bi)&\text{if $s=r,$}\\
	(X_r-i_r)e(\bi)&\text{if $s=r+1.$}
	\end{cases}$$ is nilpotent in $\mcH(\Lambda)e(\bi)$. Similarly, we can show that 
	$\parti_r((X_s-i_s)^m)e(\bi)=0$  by the binomial theorem for an integer $m\gg0$ whenever $i_r=i_{r+1}$. Therefore, if $i_r=i_{r+1}$, we get from (\ref{fT}) that
	\begin{align*}(X_s-i_s)^mT_re(\bi)=T_r(\sigma_r(X_s)-i_s)^me(\bi)+{\parti}_r((X_s-i_s)^m)e(\bi)=0\end{align*}
	whenever $m\gg0$. Thus $T_re(\bi)\in e(\bi)\mcH(\Lambda)$ and then $T_re(\bi)=e(\bi)T_re(\bi)=e(\bi)T_r.$ 
	
	If $i_{r+1}\neq i_r$, as a result of (\ref{fT}), it holds that
	\begin{align*}(X_s-\sigma_r(\bi)_s)^m[T_r(X_r-X_{r+1})+1]e(\bi)&=[T_r(X_r-X_{r+1})+1](\sigma_r(X_s)-\sigma_r(\bi)_s)^me(\bi)\\
	&=0\end{align*}
	whenever $m\gg0$. Therefore
	\begin{align*}T_r(X_r-X_{r+1})e(\bi)+e(\bi)=e(\sigma_r(\bi))[T_r(X_r-X_{r+1})+1]
	\end{align*}
	Then right-multiplying by $(X_r-X_{r+1})^{-1}e(\bi)$, we obtain $$T_re(\bi)=e(\sigma_r(\bi))T_re(\bi)-
	(X_r-X_{r+1})^{-1}e(\bi).$$
	Similarly, we can prove that $$e(\sigma_r(\bi))T_r=e(\sigma_r(\bi))T_re(\bi)-
	(X_r-X_{r+1})^{-1}e(\sigma_r(\bi)).$$
	Therefore $$T_re(\bi)=e(\sigma_r(\bi))T_r+
	(X_r-X_{r+1})^{-1}(e(\sigma_r(\bi))-e(\bi)).$$
\end{proof}

\subsection{Cyclotomic non-degenerate affine Hecke algebras}
Recall that the {\itshape non-degenerate cyclotomic affine Hecke algebra $\mcH_{q}(\Lambda)$} is
$$\mcH_{q}(\Lambda):=\mcH_{q}/\langle\prod_{i\in I}(X_1-q^i)^{\Lambda_i}\rangle.$$
Similar to \cite[Subsection 4.1]{Brundan-Kleshchev09}, there is a system $\{e(\bi) \ | \ \bi\in \mcC\}$ of mutually orthogonal idempotents in $\mcH_q(\Lambda)$ such that $1=\sum_{\bi\in I^n}e(\bi)$ and \begin{align*}e(\bi)\mcH_{q}(\Lambda)=\{h\in \mcH_{q}(\Lambda) \ | \ (X_r-q^{i_r})^mh=0 \ \mbox{for all} \ 1\leq r\leq n \ \mbox{and} \ m\gg0\}\end{align*}.
Accordingly, it holds that $X_re(\bi)=e(\bi)X_r$ for all $1\leq r\leq n$ and $\bi\in \mcC$, and the element 
$f(X)e(\bi)$ with $f(X)\in \Bbbk[X]$ is a unit in $e(\bi)\mcH_{q}(\Lambda)$ if and only if $f(q^{\bi})\neq 0$. We write $f(X)^{-1}e(\bi)$ for the inverse in this case.

Similar to the proof of Lemma \ref{lem:degenerate}, we can show that
\begin{lemma} For $1\leq r<n$ and $\bi\in I^{n}$, we have
	\begin{equation} \label{Tre-eTr} T_re(\bi)=\begin{cases}
	e(\bi)T_r & \text{if $i_r=i_{r+1}$},\\
	e(\sigma_r(\bi))T_r+\tfrac{(1-q)X_{r+1}}{X_{r+1}-X_r}(e(\sigma_r(\bi))-e(\bi)) & \text{if $i_r\neq i_{r+1}$}.
	\end{cases}
	\end{equation}
\end{lemma}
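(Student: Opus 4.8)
The plan is to repeat the proof of Lemma \ref{lem:degenerate} almost verbatim, with the Demazure term $\parti_r(X_s)$ replaced everywhere by $(q-1)X_{r+1}\parti_r(X_s)$, as dictated by $(\ref{TX})$. Since $e(\bi)=e(\sigma_r(\bi))=0$ unless $\bi$ lies in the orbit $\mcC$, I may assume $\bi\in\mcC$. Two preliminary remarks will be used. First, $\mcH_q(\Lambda)$ carries the anti-automorphism $*$ fixing every $X_r^{\pm1}$ and every $T_r$: it is well defined since the relations $(\ref{relation:affhecke X})$--$(\ref{TTT})$ are stable under reversing products — for $(\ref{TX})$ one uses that $X_{r+1}$ commutes with the polynomial $\parti_r(X_s)$ — and it fixes each $e(\bi)$, these lying in the commutative subalgebra generated by $X_1,\dots,X_n$. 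Second, if $i_r\neq i_{r+1}$ then $q^{i_r}\neq q^{i_{r+1}}$ by the choice of $e$, so $(X_r-X_{r+1})e(\bi)$ is an invertible scalar multiple of $e(\bi)$ plus a nilpotent, hence a unit in $e(\bi)\mcH_q(\Lambda)e(\bi)$, whose inverse I denote by $(X_r-X_{r+1})^{-1}e(\bi)$.

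In the case $i_r=i_{r+1}$, so that $\sigma_r(\bi)=\bi$, I would show $(X_s-q^{i_s})^mT_re(\bi)=0$ for all $s$ and $m\gg0$ exactly as in Lemma \ref{lem:degenerate}: push $(X_s-q^{i_s})^m$ past $T_r$ using $fT_r=T_r\sigma_r(f)+(q-1)X_{r+1}\parti_r(f)$; the term $T_r(\sigma_r(X_s)-q^{i_s})^me(\bi)$ dies because $\sigma_r(\bi)=\bi$ and $(X_t-q^{i_t})e(\bi)$ is nilpotent for every $t$, while the Demazure term $(q-1)X_{r+1}\parti_r((X_s-q^{i_s})^m)e(\bi)$ dies because, on expanding $\parti_r$ of the power (writing $i:=i_r=i_{r+1}$), every summand is $(X_r-q^{i})^a(X_{r+1}-q^{i})^b$ with $a+b=m-1$, and so is annihilated by $e(\bi)$ once $m$ exceeds twice the nilpotency index. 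This yields $T_re(\bi)=e(\bi)T_re(\bi)$; applying $*$ yields $e(\bi)T_r=e(\bi)T_re(\bi)$, and hence $T_re(\bi)=e(\bi)T_r$.

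In the case $i_r\neq i_{r+1}$ the key is the intertwining element
\[
g:=T_r(X_r-X_{r+1})+(q-1)X_{r+1}\in\mcH_q(\Lambda),
\]
which takes over the role played by $T_r(X_r-X_{r+1})+1$ in the degenerate proof. A one-line computation with $hT_r=T_r\sigma_r(h)+(q-1)X_{r+1}\parti_r(h)$ and the identity $(X_r-X_{r+1})\parti_r(h)=\sigma_r(h)-h$ shows $h\,g=g\,\sigma_r(h)$ for all $h\in\Bbbk[X]$. Taking $h=(X_s-q^{\sigma_r(\bi)_s})^m$ gives $(X_s-q^{\sigma_r(\bi)_s})^m\,g\,e(\bi)=g\,(\sigma_r(X_s)-q^{\sigma_r(\bi)_s})^m\,e(\bi)=0$ for $m\gg0$ and all $s$, since $(\sigma_r(X_s)-q^{\sigma_r(\bi)_s})e(\bi)$ is nilpotent; hence $g\,e(\bi)=e(\sigma_r(\bi))\,g\,e(\bi)$. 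Right-multiplying by $(X_r-X_{r+1})^{-1}e(\bi)$ and using $e(\sigma_r(\bi))e(\bi)=0$ yields
\[
T_re(\bi)=e(\sigma_r(\bi))T_re(\bi)-\tfrac{(q-1)X_{r+1}}{X_r-X_{r+1}}e(\bi).
\]
Doing the same with $\bi$ replaced by $\sigma_r(\bi)$ and applying $*$ (which fixes $T_r$, each $e(\bj)$, and $\tfrac{(q-1)X_{r+1}}{X_r-X_{r+1}}e(\bj)$) yields
\[
e(\sigma_r(\bi))T_r=e(\sigma_r(\bi))T_re(\bi)-\tfrac{(q-1)X_{r+1}}{X_r-X_{r+1}}e(\sigma_r(\bi)).
\]
Subtracting the two identities and using $\tfrac{(q-1)X_{r+1}}{X_r-X_{r+1}}=\tfrac{(1-q)X_{r+1}}{X_{r+1}-X_r}$ gives the asserted formula.

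The whole argument is routine given Lemma \ref{lem:degenerate}; the one genuinely new ingredient is the correct shape of $g$, where the constant $1$ of the degenerate case is replaced by $(q-1)X_{r+1}$. The only step requiring a little care is the vanishing of the Demazure contribution when $i_r=i_{r+1}$, which comes down to the factorization $u^m-v^m=(u-v)\sum_{a+b=m-1}u^av^b$ together with the nilpotency of $(X_t-q^{i_t})e(\bi)$, plus the bookkeeping of which side each idempotent sits on — which $*$ handles.
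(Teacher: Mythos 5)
Your proof is correct and is essentially the paper's intended argument: the paper proves this lemma only by the remark that it is ``similar to the proof of Lemma \ref{lem:degenerate}'', and your adaptation --- replacing the degenerate intertwiner $T_r(X_r-X_{r+1})+1$ by $T_r(X_r-X_{r+1})+(q-1)X_{r+1}$, checking $hg=g\sigma_r(h)$, and supplying the left-handed identities via the $*$-anti-automorphism --- is exactly that adaptation, with the nilpotency bookkeeping done correctly. (One cosmetic slip: $e(\bj)$ need not vanish for $\bj\in I^n\setminus\mcC$, since $\bj$ may lie in another orbit supporting a nonzero block; but your opening reduction to $\bi\in\mcC$ is unnecessary anyway, as the argument works verbatim for every $\bi\in I^n$.)
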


Let $$e(\mcC):=\sum_{\bi\in \mcC}e(\bi).$$
Then $e(\mcC)$ is a primitive central idempotent by (\ref{deTre-eTr}) and (\ref{Tre-eTr}).

\subsection{The BK isomorphism}  By Lemma \ref{lem:ccover}, Theorem (\ref{thm:basis semi-ration algebra}) and \ref{lem:degenerate}, (\ref{Tre-eTr})there is a homomorphism 
$$\rho \colon \tilde{\mathcal{L}}(\Lambda) \to \mcH(\Lambda)e(\mcC) (\mbox{resp}.\ \mcH_q(\Lambda)e(\mcC))$$ sending the generators $X_r, T_r$ to the same named elements, and $f^{-1}(X)\epsilon(\bi)$ with $f(\bi)\neq 0$ (resp. $f^{-1}(X)\epsilon(\bi)$ with $f(q^\bi)\neq 0$) to $f^{-1}(X)e(\bi)$. We have the following important result.
\begin{lemma} \label{lem:rho} $\rho$ is an algebra isomorphism.
\end{lemma}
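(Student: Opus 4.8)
The plan is to construct an explicit two-sided inverse $\rho^{-1}\colon \mcH(\Lambda)e(\mcC)\to \tilde{\mathcal{L}}(\Lambda)$ (and similarly in the non-degenerate case) and check that the two composites are the identity. First I would record what is already in hand: by Lemma \ref{lem:ccover} the cyclotomic quotient $\tilde{\mathcal{L}}(\Lambda)$ coincides with $\tilde{\mathcal{L}}/\langle \prod_{i\in I}(X_1-i)^{\Lambda_i}\rangle$ (resp. $\tilde{\mathcal{L}}/\langle \prod_{i\in I}(X_1-q^i)^{\Lambda_i}\rangle$), and by Theorem \ref{thm:basis semi-ration algebra} the algebra $\tilde{\mathcal{L}}$ has a presentation by the affine Hecke generators $X_r, T_r$ together with the localized idempotents $f^{-1}(X)\epsilon(\bi)$, subject to the affine Hecke relations plus (\ref{epsilon}), (\ref{Xepis}), (\ref{deTepsi}) (resp. (\ref{Tepsi})) and (\ref{def(X)f^{-1}(X)}) (resp. (\ref{f(X)f^{-1}(X)})). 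So $\rho$ is well-defined precisely because the target $\mcH(\Lambda)e(\mcC)$ satisfies the same list of relations: the affine Hecke relations hold in $\mcH(\Lambda)$, the $e(\bi)$ for $\bi\in\mcC$ are orthogonal idempotents summing to $e(\mcC)$, $X_re(\bi)=e(\bi)X_r$, and $T_re(\bi)$ obeys exactly (\ref{deTepsi}) by Lemma \ref{lem:degenerate} (resp. (\ref{Tre-eTr})); moreover $f(X)e(\bi)$ is invertible in $e(\bi)\mcH(\Lambda)$ iff $f(\bi)\neq0$ (resp. $f(q^\bi)\neq0$), which validates the defining relations for $f^{-1}(X)\epsilon(\bi)$. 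Finally $\prod_{i\in I}(X_1-i)^{\Lambda_i}=0$ in $\mcH(\Lambda)$, so $\rho$ descends to the cyclotomic quotient — this is the content of the opening phrase "By Lemma \ref{lem:ccover}, \dots there is a homomorphism $\rho$".

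Next I would build the inverse. Define $\mu\colon \mcH(\Lambda)e(\mcC)\to \tilde{\mathcal{L}}(\Lambda)$ on generators by $X_re(\mcC)\mapsto X_r$, $T_re(\mcC)\mapsto T_r$, and $e(\bi)\mapsto \epsilon(\bi)$ for $\bi\in\mcC$ (with $e(\bi)\mapsto 0$ if one wishes to extend to all of $\mcH(\Lambda)$, but it is cleaner to work with the block $\mcH(\Lambda)e(\mcC)$ directly, which is generated by $X_re(\mcC), T_re(\mcC)$ together with the $e(\bi)$, $\bi\in\mcC$). To see $\mu$ is well-defined I must verify that the defining relations of $\mcH(\Lambda)e(\mcC)$ — the affine Hecke relations for $X_re(\mcC), T_re(\mcC)$, the idempotent relations for the $e(\bi)$, the commutation $X_re(\bi)=e(\bi)X_r$, the relation (\ref{deTre-eTr}) for $T_re(\bi)$, and the cyclotomic relation $\prod_{i\in I}(X_1-i)^{\Lambda_i}e(\mcC)=0$ — all map to valid identities in $\tilde{\mathcal{L}}(\Lambda)$. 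The affine Hecke relations and idempotent relations are immediate from (\ref{deaffhecke X})--(\ref{deTTT}), (\ref{epsilon}), (\ref{Xepis}); the image of (\ref{deTre-eTr}) is exactly (\ref{deTepsi}) after noting that in $\tilde{\mathcal{L}}(\Lambda)$ the element $X_r-X_{r+1}$ acting against $\epsilon(\bi)$ with $i_r\neq i_{r+1}$ is invertible — here one uses that $X_r-X_{r+1}=y_r-y_{r+1}+i_r-i_{r+1}$ against $\epsilon(\bi)$, and since the $y$'s are nilpotent in $\tilde{\mathcal{L}}(\Lambda)$ and $i_r-i_{r+1}\neq0$ in $\Bbbk$ (this is where the orbit condition and the quiver combinatorics enter), this is a unit; and the cyclotomic relation maps to $\prod_{i\in I}(X_1-i)^{\Lambda_i}=0$ in $\tilde{\mathcal{L}}(\Lambda)$, which holds by Lemma \ref{lem:ccover}. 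The non-degenerate case is identical with $X_r-i_r$ replaced by $1-q^{-i_r}X_r$ and the appropriate rational coefficients.

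Then I would check $\mu\circ\rho=\mathrm{id}$ and $\rho\circ\mu=\mathrm{id}$. Both composites fix the generators by construction: $\rho(\mu(X_re(\mcC)))=\rho(X_r)=X_re(\mcC)$, $\rho(\mu(T_re(\mcC)))=T_re(\mcC)$, $\rho(\mu(e(\bi)))=\rho(\epsilon(\bi))=e(\bi)$, and in the other direction $\mu(\rho(X_r))=X_r$, $\mu(\rho(T_r))=T_r$, $\mu(\rho(\epsilon(\bi)))=\epsilon(\bi)$; one also checks that $\mu$ and $\rho$ send $f^{-1}(X)\epsilon(\bi)\leftrightarrow f^{-1}(X)e(\bi)$ compatibly, which follows because inverses of a given element in an associative algebra are unique, so the image of an inverse is forced. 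Since the generators are preserved and both maps are algebra homomorphisms, the composites are the identity, giving the isomorphism. The main obstacle, and the only place requiring genuine care rather than bookkeeping, is establishing that $\mu$ is well-defined — specifically that the relation (\ref{deTre-eTr}) (resp. (\ref{Tre-eTr})), which in $\mcH(\Lambda)$ involves the genuine inverse $(X_r-X_{r+1})^{-1}$ of an element that is a unit in the block, maps to the relation (\ref{deTepsi}) (resp. (\ref{Tepsi})) in $\tilde{\mathcal{L}}(\Lambda)$, where the corresponding element $(X_r-X_{r+1})^{-1}\epsilon(\bi)$ is a unit for a combinatorial reason (nilpotency of the $y$'s together with $i_r\neq i_{r+1}$). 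One must also confirm that every invertibility hypothesis "$f(\bi)\neq0$" (resp. "$f(q^\bi)\neq0$") used in the presentation of $\tilde{\mathcal{L}}$ matches exactly the invertibility criterion in $e(\bi)\mcH(\Lambda)$; this is precisely the content recorded just before Lemma \ref{lem:degenerate} and its non-degenerate analogue, so it reduces to citing those facts.
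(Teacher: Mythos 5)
Your construction of $\rho$ and the verification that it is well defined and surjective match the paper. The gap is in the inverse. You propose to define $\mu\colon \mcH(\Lambda)e(\mcC)\to\tilde{\mathcal{L}}(\Lambda)$ on the generators $X_re(\mcC)$, $T_re(\mcC)$, $e(\bi)$ and then ``verify that the defining relations of $\mcH(\Lambda)e(\mcC)$ all map to valid identities.'' But no presentation of the block $\mcH(\Lambda)e(\mcC)$ by those generators and the relations you list is available anywhere in the paper: $\mcH(\Lambda)$ is presented as a quotient of $\mcH$, while the idempotents $e(\bi)$ are \emph{constructed} (via generalized eigenspace decomposition), not adjoined by a presentation. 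Knowing that your list of relations holds in the block does not tell you it is a \emph{complete} list, so checking them does not make $\mu$ well defined; establishing that they do form a presentation would be essentially equivalent to the theorem you are trying to prove. In particular you are not free to \emph{decree} $e(\bi)\mapsto\epsilon(\bi)$ --- that value has to be forced.

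The paper's proof avoids this by running the inverse through $\mcH(\Lambda)$ itself, which \emph{does} have a known presentation (the relations (\ref{deaffhecke X})--(\ref{deTTT}) plus the cyclotomic relation, the latter being killed in $\tilde{\mathcal{L}}(\Lambda)$ by Lemma \ref{lem:ccover}). This gives a homomorphism $\tau\colon\mcH(\Lambda)\to\tilde{\mathcal{L}}(\Lambda)$ defined only on $X_r,T_r$, and the images of the idempotents are then \emph{computed} rather than prescribed: from $(X_r-j_r)^m e(\bj)=0$ and the invertibility of $(X_r-j_r)\epsilon(\bi)$ in $\tilde{\mathcal{L}}(\Lambda)$ when $i_r\neq j_r$ one deduces $\epsilon(\bi)\tau(e(\bj))=0$ for $\bi\neq\bj$, hence $\tau(e(\bj))=0$ for $\bj\notin\mcC$ and $\tau(e(\bj))=\epsilon(\bj)$ for $\bj\in\mcC$. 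Restricting $\tau$ to the block then gives a left inverse of $\rho$ on generators, which together with surjectivity of $\rho$ finishes the proof. If you want to salvage your argument, replace your $\mu$ by this $\tau$ and supply the computation of $\tau(e(\bj))$; the rest of your bookkeeping (matching (\ref{deTre-eTr}) with (\ref{deTepsi}), uniqueness of inverses forcing $f^{-1}(X)\epsilon(\bi)\leftrightarrow f^{-1}(X)e(\bi)$) is correct and carries over.
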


\begin{proof} We only prove the claim for degenerate case since the proof of non-degenerate case is similar. Apparently, $\rho$ is surjective, so we only need to construct a left-inverse of $\rho$. By Lemma \ref{lem:ccover}, there is a homomorphism
	$$\tau \colon \mcH(\Lambda) \to \tilde{\mathcal{L}}(\Lambda)$$
	sending the generators $X_r,T_r$ to the same named elements. Let $\bi\in \mcC $ and $\bj\in I^n$. If $\bi\neq \bj$, then there is some $1\leq r\leq n$ such that $j_r\neq i_r$. We claim that $\epsilon(\bi)\tau(e(\bj))=0$. In fact, by the construction of $e(\bj)$, there is an integer $m\gg0$ such that $(X_r-j_r)^me(\bj)=0$. Thus we have $$(X_r-j_r)^m\epsilon(\bi)\tau(e(\bj))=\epsilon(\bi)\tau((X_r-j_r)^me(\bj))=0.$$
	The assumption $j_r\neq i_r$ implies that the element $(X_r-j_r)^{-1}\epsilon(\bi)\in \tilde{\mathcal{L}}(\Lambda)$. Thus we get 
	$$\epsilon(\bi)\tau(e(\bj))=(X_r-j_r)^{-m}(X_r-j_r)^m\epsilon(\bi)\tau(e(\bj))=(X_r-j_r)^{-m}\epsilon(\bi)0=0.$$
	Therefore, if $\bj\in I^n\setminus \mcC$ we have $\tau(e(\bj))=\sum_{\bi\in \mcC}\epsilon(\bi)\tau(e(\bj))=0.$ Therefore, if $\bj\in \mcC$, we obtain
	$$\tau(e(\bj))=\sum_{\bi\in \mcC}\epsilon(\bi)\tau(e(\bj))=\epsilon(\bj)\tau(e(\bj))=\epsilon(\bj)\sum_{\bi\in I^n}\tau(e(\bi))=\epsilon(\bj)\tau(1)=\epsilon(\bj).$$
	These show that $\tau|_{ \mcH(\Lambda)e(\mcC)}\colon \mcH(\Lambda)e(\mcC)\to \tilde{\mathcal{L}}(\Lambda)$ is an algebra homomorphism. It is easy to check that $\tau \rho$ is the identity on each generator of $\tilde{\mathcal{L}}(\Lambda)$. Thus $\rho$ is an isomorphism.
\end{proof}

By Lemma \ref{lem:coverCRKLRA} and Lemma \ref{lem:rho}, we have arrive at our main result.
\begin{theorem}\label{thm:mainresult}
 The cyclotomic KLR algebra of type A and blocks of the cyclotomic $($ degenerate $)$ affine Hecke algebra of a symmetric group can be realized as the same quotient of an algebra.
\end{theorem}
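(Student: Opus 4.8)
The plan is to assemble the chain of isomorphisms built in the previous sections, with the cyclotomic semi-rationalization algebra $\tilde{\mathcal{L}}(\Lambda)$ playing the role of the common quotient. First I would fix the $S_n$-orbit $\mcC$ of $I^n$ and recall that the semi-rationalization algebra $\tilde{\mathcal{L}}$ (defined via Proposition \ref{prop:KLR basis}) is a single $\Bbbk$-algebra, so that $\tilde{\mathcal{L}}(\Lambda)=\tilde{\mathcal{L}}/\langle y_1^{\Lambda_{i_1}}\epsilon(\bi)\mid\bi\in\mcC\rangle$ is literally a quotient of it. This $\tilde{\mathcal{L}}(\Lambda)$ is the "same quotient of an algebra" promised in the statement.

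Next I would invoke Lemma \ref{lem:coverCRKLRA}, which provides a $\Bbbk$-algebra isomorphism $\mcR(\Lambda)\cong\tilde{\mathcal{L}}(\Lambda)$, identifying the cyclotomic KLR algebra of type A with the cyclotomic semi-rationalization algebra. Then, using Theorem \ref{thm:basis semi-ration algebra} (the two complete presentations of $\tilde{\mathcal{L}}$ coming from $\mcH$ and from $\mcH_q$) together with Lemma \ref{lem:ccover} (which matches the defining ideal $\langle y_1^{\Lambda_{i_1}}\epsilon(\bi)\rangle$ with $\langle\prod_{i\in I}(X_1-i)^{\Lambda_i}\rangle$, resp.\ $\langle\prod_{i\in I}(X_1-q^i)^{\Lambda_i}\rangle$), Lemma \ref{lem:rho} gives algebra isomorphisms $\rho\colon\tilde{\mathcal{L}}(\Lambda)\xrightarrow{\sim}\mcH(\Lambda)e(\mcC)$ in the degenerate case and $\rho\colon\tilde{\mathcal{L}}(\Lambda)\xrightarrow{\sim}\mcH_q(\Lambda)e(\mcC)$ in the non-degenerate case. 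Since $e(\mcC)=\sum_{\bi\in\mcC}e(\bi)$ is a primitive central idempotent (as noted after (\ref{Tre-eTr})) attached to the orbit $\mcC$, the algebras $\mcH(\Lambda)e(\mcC)$ and $\mcH_q(\Lambda)e(\mcC)$ are exactly the corresponding blocks of the cyclotomic (degenerate) affine Hecke algebra of $S_n$.

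Composing these isomorphisms yields $\mcR(\Lambda)\cong\tilde{\mathcal{L}}(\Lambda)\cong\mcH(\Lambda)e(\mcC)$ and $\mcR(\Lambda)\cong\tilde{\mathcal{L}}(\Lambda)\cong\mcH_q(\Lambda)e(\mcC)$, so all three algebras are realized as the one quotient $\tilde{\mathcal{L}}/\langle y_1^{\Lambda_{i_1}}\epsilon(\bi)\mid\bi\in\mcC\rangle$; letting $\mcC$ range over all $S_n$-orbits of $I^n$ and summing then recovers the full Brundan--Kleshchev isomorphism. Because each substantive step is already established in Lemmas \ref{lem:coverCRKLRA}, \ref{lem:ccover}, \ref{lem:rho} and Theorem \ref{thm:basis semi-ration algebra}, no genuine obstacle remains; the only point deserving a word of care is the bookkeeping identification of the cyclotomic ideal written in the $y$-variables with the one written in the $X$-variables, which is precisely the content of Lemma \ref{lem:ccover}, so nothing further is required.
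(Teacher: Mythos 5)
Your proposal is correct and follows essentially the same route as the paper: the theorem is obtained by combining Lemma \ref{lem:coverCRKLRA} ($\mcR(\Lambda)\cong\tilde{\mathcal{L}}(\Lambda)$) with Lemma \ref{lem:rho} ($\tilde{\mathcal{L}}(\Lambda)\cong\mcH(\Lambda)e(\mcC)$, resp.\ $\mcH_q(\Lambda)e(\mcC)$), with $\tilde{\mathcal{L}}(\Lambda)$ serving as the common quotient. Your additional remarks on Lemma \ref{lem:ccover} and Theorem \ref{thm:basis semi-ration algebra} simply make explicit the ingredients the paper's two lemmas already rely on.
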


\vskip10pt

\end{document}